\newtheorem{thm}{Theorem}[section]
\newtheorem{cor}[thm]{Corollary}
\newtheorem{lem}[thm]{Lemma}
\newtheorem*{claim}{Claim}
\theoremstyle{definition}
\newtheorem{defn}[thm]{Definition}
\theoremstyle{remark}
\newtheorem*{rem}{Remark}
\newtheorem{example}[thm]{Examples}
\newtheorem{constants}[thm]{Constants}
\newcommand{\Gx }{\mathscr{G} (G, S)}
\newcommand{\GP }{(G, \mathcal P)}
\newcommand{\pG}{\partial{G}}
\newcommand{\PS}[1]{\mathcal P(s,{#1})}
\newcommand{\g}[1]{\nu_{#1}}
\newcommand{\diam }[1]{\textbf{Diam}(#1)}
\newcommand{\proj }{\textbf{Proj}}
\newcommand{\len }{\ell}
\begin{document}

\title{Statistical hyperbolicity of relatively hyperbolic groups}

\author{Jeremy Osborne}

\address{Kenosha, WI 53144, United States}
\email{Dr.J.A.Osborne@gmail.com}
 
\author{Wen-yuan Yang}

\address{Beijing International Center for Mathematical Research,
Peking University, Beijing, 100871, P.R.China}

\email{wyang@math.pku.edu.cn}
\thanks{}


\subjclass[2000]{Primary 20F65, 20F67}

\date{\today}

\dedicatory{}

\keywords{Relatively hyperbolic groups, Statistical hyperbolicity, Growth function}

\begin{abstract}
We prove that a non-elementary relatively hyperbolic group is statistically hyperbolic with respect to every finite generating set. We also establish the statistical hyperbolicity for certain direct products of two groups, one of which is relatively hyperbolic.

\end{abstract}

\maketitle

\section{Introduction}

The idea of statistical hyperbolicity was first introduced by M. Duchin, S. Leli\`evre, and C. Mooney in \cite{DLM}.  Let $G$ be a group generated by a finite set $S$. Assume that $1 \notin S=S^{-1}$. Denote by $\Gx$ the Cayley graph of $G$ with respect to $S$. Consider the natural combinatorial metric on $\Gx$, denoted by $d$, inducing a word metric on $G$. The intuitive meaning of statistical hyperbolicity of a group can then be summed up as follows: On average, random pairs of points x,y on a sphere of the Cayley graph of the group almost always have the property that $d(x,y)$ is nearly equal to $d(x, 1) + d(1, y)$. More precisely,

\begin{defn}
Denote $S_n=\{g\in G: d(1, g) = n\}$ for $n\ge 0$. Define
$$
E(G, S) =\lim_{n\to \infty} \frac{1}{|S_n|^2} \sum_{x, y\in S_n} \frac{d(x, y)}{n},
$$
if the limit exists.
The pair $(G, S)$ is called \textit{statistically hyperbolic} if $E(G, S)=2$.
\end{defn}

Recall that a group is called \textit{elementary} if it is a finite group or a finite extension of $\mathbb Z$. It is easily checked that an elementary group is not statistically hyperbolic with respect to any generating set. In \cite{DLM}, Duchin-Leli\`evre-Mooney proved that $\mathbb Z^d$ for $d\ge 2$ is not statistically hyperbolic for any finite generating set. It was also discovered by Duchin-Mooney in \cite{DM} that the integer Heisenberg group with any finite generating set is not statistically hyperbolic.

A list of statistically hyperbolic examples were also found in \cite{DLM}:
\begin{example}\label{stahypexamples}
\begin{enumerate}
\item
Non-elementary hyperbolic groups for any finite generating set.
\item
Direct product of a non-elementary hyperbolic group and a group for certain finite generating sets.
\item
The lamplighter groups $\mathbb Z_m \wr \mathbb Z$ where $m \ge 2$ for certain
generating sets.
\end{enumerate}
\end{example}
We remark that an analogous definition of statistical hyperbolicity to the above can be considered for any metric space with a measure. (Here for graphs we consider the counting measures). We refer the reader to \cite{DLM} for precise definitions. For any $m, p \ge 2$, the Diestel-Leader graph $DL(m, p)$ is proved to be statistically hyperbolic in \cite{DLM}. In \cite{DDM}, Dowdall-Duchin-Masur established the statistical hyperbolicity for Teichm$\ddot{u}$ller space with various measures.

Summarizing the above results, one could think of the number $E(G, S)$ as a measurement of negative curvature in groups and spaces. So it would be natural to expect that the statistical hyperbolic property holds  for a more general class of groups with negative curvature. A natural source of such groups to be investigated  is the class of relatively hyperbolic groups, which generalizes word hyperbolic groups, and includes many more examples such as
\begin{enumerate}
\item
fundamental groups of non-uniform lattices with negative curvature
\cite{Bow3}, 
\item
free products of groups, or a finite graph of groups with finite edge groups,
\item
limit groups \cite{Dah}, and 
\item
CAT(0) groups with
isolated flats \cite{HruKle}.
\end{enumerate}
We refer the reader to Section 2 and references therein for more details on relatively hyperbolic groups. The purpose of this article is then to generalize the first two items in Examples \ref{stahypexamples} to the setting of relatively hyperbolic groups.

Recently, the first-named author has established in his thesis \cite{Osborne} that  relatively hyperbolic groups are statistically hyperbolic, provided that the group growth rate dominates the ones of parabolic subgroups.   Our first result is to drop this assumption and to establish the full generalization of Duchin-Leli\`evre-Mooney's above result in relatively hyperbolic groups. 
\begin{thm}\label{mainthm}
A non-elementary relatively hyperbolic group is statistically hyperbolic with respect to every finite generating set.
\end{thm}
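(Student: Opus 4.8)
The plan is to reduce the statement to a counting estimate and then run the standard ``generic pairs diverge'' argument, the only new ingredient being the treatment of geodesics that linger inside a single peripheral coset. Since $d(x,y)\le d(1,x)+d(1,y)=2n$ for every $x,y\in S_n$, we have $\limsup_n\frac{1}{|S_n|^2}\sum_{x,y\in S_n}\frac{d(x,y)}{n}\le2$, so it suffices to prove that for each $\epsilon>0$ the proportion
\[
p_n(\epsilon)=\frac{1}{|S_n|^2}\,\bigl|\{(x,y)\in S_n\times S_n:\ d(x,y)\le(2-\epsilon)n\}\bigr|
\]
tends to $0$: then $\frac{1}{|S_n|^2}\sum_{x,y\in S_n}\frac{d(x,y)}{n}\ge(2-\epsilon)(1-p_n(\epsilon))\to2-\epsilon$ for every $\epsilon>0$, so the limit exists and equals $2$. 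Writing $(x\cdot y)_1=\tfrac12\bigl(d(1,x)+d(1,y)-d(x,y)\bigr)$ for the Gromov product, the pairs counted by $p_n(\epsilon)$ are exactly those with $(x\cdot y)_1\ge\epsilon n/2$, so we must show that only a vanishing proportion of pairs on $S_n$ have large Gromov product.

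The geometric input is the relatively hyperbolic structure $\GP$: the coned-off Cayley graph $\mathscr{G}(G,S\cup\mathcal P)$ is $\delta$-hyperbolic, geodesics of $\Gx$ map to uniform quasigeodesics of it, the pair $\GP$ has the Bounded Coset Penetration (BCP) property, and, $G$ being non-elementary, the action on the coned-off graph has independent loxodromic elements, so in particular $|S_n|\asymp\lambda^n$ with $\lambda>1$ (purely exponential growth). Combining the thin-triangle property of the coned-off graph with BCP should give the following dichotomy for a pair with $(x\cdot y)_1\ge m$: either (i) there is a vertex $z$ with $d(1,z)\ge m-C$ lying within a constant $C$ of both geodesics $[1,x]$ and $[1,y]$ (an ``approximate common center''), or (ii) $[1,x]$ and $[1,y]$ travel together for a length $\gtrsim m$ inside one common peripheral coset. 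Case (ii) is precisely where the comparison between the two metrics breaks down: the coned-off geodesics have separated, but the true geodesics remain close because they both enter a peripheral region carrying no hyperbolicity.

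It then remains to bound, for $m=\epsilon n/2$, the number of bad pairs of each type. For type (i): given a fixed admissible center $z$, the set of $x\in S_n$ with $d(1,z)+d(z,x)\le n+C$ lies in a ball of radius $n-d(1,z)+C$ about $z$, hence has $\lesssim\lambda^{\,n-d(1,z)}$ elements; summing over $z$ with $d(1,z)\ge\epsilon n/2-C$ (a geometric series dominated by its smallest term) bounds the number of type (i) bad pairs by $\lesssim\lambda^{(2-\epsilon/2)n}=o(\lambda^{2n})=o(|S_n|^2)$. For type (ii): if $[1,x]$ and $[1,y]$ fellow-travel for length $\ge\epsilon n/4$ inside a coset $C$, then each of $[1,x],[1,y]$ has an excursion of depth $\ge\epsilon n/4$ in $C$, and by BCP a single geodesic of length $n$ has at most $4/\epsilon$ such excursions, so the number of type (ii) bad pairs is at most $\sum_C N_C^2\le\bigl(\max_C N_C\bigr)\sum_C N_C\le(4/\epsilon)\bigl(\max_C N_C\bigr)|S_n|$, where $N_C=|\{x\in S_n:\ [1,x]\text{ has an excursion of depth}\ge\epsilon n/4\text{ in }C\}|$. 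Thus it suffices that $\max_C N_C=o(|S_n|)$. This is exactly where the hypothesis of \cite{Osborne} was used; to remove it one observes that $N_C$ is governed by the growth of the peripheral subgroup in the \emph{induced} word metric $\dx$, not in its intrinsic metric, and that a ping-pong argument (producing a free factor $\langle h\rangle\ast P_i\hookrightarrow G$ for a suitable hyperbolic $h$, or alternatively the available purely exponential growth estimates) forces this induced growth rate to be strictly below $\lambda$, whatever the distortion of $P_i$. Hence $\max_C N_C=o(|S_n|)$ and $p_n(\epsilon)\to0$.

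I expect the main obstacle to be making the dichotomy of the second paragraph precise: one must run the thin-triangle estimate in $\mathscr{G}(G,S\cup\mathcal P)$, transport it through BCP to the metric $\dx$ while keeping exact track of the peripheral excursions of both geodesics, and check that the configurations it leaves uncontrolled are exactly those killed by the growth estimate for the induced metric. The two most delicate points are the bookkeeping when the geodesics enter and leave many peripheral cosets — so that the common center in type (i) is genuinely at distance $\ge\epsilon n/2-C$ from $1$ — and the verification that the peripheral growth rate for the induced metric stays strictly below $\lambda$ even for exponentially distorted peripheral subgroups, which is the crucial place where the gap hypothesis of \cite{Osborne} drops out.
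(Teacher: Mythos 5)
Your reduction---show that the proportion of pairs in $S_n\times S_n$ with Gromov product at least $\epsilon n/2$ tends to zero---is sound and is essentially the shape of the paper's argument, and your type~(i) count goes through once one has purely exponential growth $|S_n|\asymp \exp(n\nu_G)$ (this is \cite[Theorem 1.8]{YANG7}, quoted as Lemma~\ref{expball}). But there are two genuine gaps. First, the dichotomy on which everything rests is only asserted (``should give'') and is not correct as stated. Geodesics entering a common peripheral coset at nearby points need not ``travel together'' inside it: for a $\mathbb Z^2$ peripheral they can leave the common entry point in orthogonal directions and still produce a Gromov-product deficit that is linear in the excursion depth. More seriously, a pair can have Gromov product $\ge m$ because both geodesics enter a common coset at an intermediate radius $a$ and make excursions of depth comparable to $m-a$; for $m/2<a<m-C$ such a pair has neither an excursion of depth $\ge \epsilon n/4$ (so it escapes your type~(ii)) nor a common transition point at radius $\ge m-C$ (so it escapes your type~(i)). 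To cover these configurations one must stratify by the excursion depth $i$ over \emph{all} scales from a fixed constant $R$ up to linear in $n$---this is exactly the paper's decomposition $S_n=C_R\cup(\cup_{i\ge1}C_{R+i})$---and the resulting count is not a single threshold estimate but a sum $\sum_{i}|S_{R+i}(P)|\exp(-\nu_G(R+i))$ whose convergence is the real issue.

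Second, that convergence is precisely the ingredient you gesture at but do not supply. A ping-pong embedding $\langle h\rangle * P\hookrightarrow G$ does not by itself show that the induced growth of $P$ is strictly below $\nu_G$, nor even that $\sum_{p\in P}\exp(-s\,d(1,p))$ converges at $s=\nu_G$; one needs a quantitative free-product growth estimate combined with purely exponential growth of $G$, and carried out properly this is \cite[Lemma 4.9]{YANG7}, quoted as Lemma~\ref{convpara} and Corollary~\ref{convergent} in the paper. This is the whole point of removing the growth-gap hypothesis of \cite{Osborne}, so it cannot be left as a remark. Note also that your bound on $N_C$ silently needs the contracting property of peripheral cosets (cf.\ Lemma~\ref{projcenter}) to pin the entry point of an excursion into a fixed coset near the projection of $1$; without this, summing over possible entry radii costs an extra factor of $n$ that the bare tail of a convergent series need not absorb. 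With the stratified decomposition, the contracting property, and Corollary~\ref{convergent} in hand your outline can be completed, but at that point it has become the paper's proof.
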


Let's say a bit about the ingredients in proof of our Theorem.  It was observed in \cite{DLM} that statistical hyperbolicity appears to be more delicate than the usual metric notion of hyperbolicity in the sense of Gromov. Namely, examples of trees can be produced to have arbitrary number $E(G, S) \in [0, 2]$. These examples lack of homogenenety cannot afford many isometries. Thus in their proof of statistical hyperbolicity for hyperbolic groups, Duchin-Leli\`evre-Mooney make essential use of a result of Coorneart about growth function in \cite{Coor}. This is recently generalized by the second-named author in \cite{YANG7} for relatively hyperbolic groups, cf. Lemma \ref{expball}.  Apart from this, we also exploit a crucial fact in \cite{YANG7} to obtain the full generality: parabolic groups have convergent Poincar\`e series (Corollary \ref{convergent}). Based on them, our proof follows roughly the outline in hyperbolic case but with more involved analysis. 
 
We now state our second result about direct product of two groups, one of which is relatively hyperbolic.  First recall the notion of \textit{growth rate} $\nu_{G, S}$ of a group $G$ relative to $S$, which is defined to be the limit
$$
\nu_{G, S} = \lim\limits_{n \to \infty} \frac{\log |S_n|}{n}.
$$ 
A generating set $S$ for $G\times H$ is called \textit{split} if every generator in $S$ lies either in $G$ or in $H$.  Denote $S_G:=S\cap G$ and $S_H:=S\cap H$. Taking into account Theorem \ref{mainthm}, we obtain the following theorem extending a similar result in \cite{DLM}.
\begin{thm}\label{directproduct}
Let $G\times H$ be a direct product of a non-elementary relatively hyperbolic group $G$ and a group $H$. Let $S$ be a split finite generating set for $G\times H$ and $S_G, S_H$ be the corresponding generating sets for $G, H$. If $\nu_{G, S_G} > \nu_{H, S_H}$, then $(G\times H, S)$ is statistically hyperbolic.
\end{thm}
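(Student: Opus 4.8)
The plan is to reduce the computation of $E(G\times H, S)$ to the already-established statistical hyperbolicity of $(G, S_G)$ from Theorem \ref{mainthm}, by showing that the exponential growth gap $\nu_{G,S_G} > \nu_{H,S_H}$ forces a sphere $S_n$ in $G\times H$ to be, asymptotically, dominated by pairs $(g,h)$ whose $G$-coordinate $g$ has $\dx[G](1,g)$ very close to $n$ and whose $H$-coordinate is negligible. Since $S$ is split, for $(g,h)\in G\times H$ we have $d((g,h),(1,1)) = d_{S_G}(1,g) + d_{S_H}(1,h)$, so a point of $S_n$ is a pair $(g,h)$ with $d_{S_G}(1,g) = k$ and $d_{S_H}(1,h) = n-k$ for some $0\le k\le n$. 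The first step is therefore to estimate $|S_n|$ for the product: $|S_n| = \sum_{k=0}^n |S_k^G|\cdot|S_{n-k}^H|$, and to use Lemma \ref{expball} (the Coornaert-type estimate: $|S_k^G| \asymp e^{\nu_{G,S_G} k}$ up to multiplicative constants, or at least the purely exponential growth of balls) together with the fact that $H$'s ball sizes grow at most like $e^{(\nu_{H,S_H}+o(1)) m}$. Because $\nu_{G,S_G}>\nu_{H,S_H}$, the sum is dominated, up to a fixed multiplicative constant, by the terms with $k$ in a window $[n - C, n]$, or more precisely the proportion of the measure of $S_n\times S_n$ coming from pairs where both $G$-coordinates satisfy $d_{S_G}(1,g_i) \ge n - R$ tends to $1$ as $R\to\infty$ uniformly in $n$.

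The second step is to feed this concentration estimate into the double sum defining $E$. Write $x=(g_1,h_1)$, $y=(g_2,h_2)$ in $S_n$. Using the split structure, $d(x,y) = d_{S_G}(g_1,g_2) + d_{S_H}(h_1,h_2)$, and trivially $d(x,y) \ge d_{S_G}(g_1,g_2)$. On the dominant part of $S_n\times S_n$ identified in Step 1, both $g_1,g_2$ lie in the annulus $\{n-R \le d_{S_G}(1,\cdot) \le n\}$ of $\Gx[G]$; restricting the $G$-coordinate to a genuine sphere $S_k^G$ and renormalizing, Theorem \ref{mainthm} (more precisely, the statement that $\frac{1}{|S_k^G|^2}\sum_{g_1,g_2\in S_k^G} \frac{d_{S_G}(g_1,g_2)}{k} \to 2$) gives that the average of $\frac{d_{S_G}(g_1,g_2)}{n}$ over this dominant part is at least $2 - \varepsilon$ for $n$ large (the shift from $k$ to $n$ costs only $O(R/n)\to 0$). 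Combined with the universal upper bound $d(x,y) \le d(x,1)+d(1,y) = 2n$, i.e. $\frac{d(x,y)}{n}\le 2$ pointwise, and the fact that the non-dominant part of $S_n\times S_n$ contributes at most $2\cdot(\text{its proportion})\to 0$, we conclude $E(G\times H, S) = 2$. Some care is needed because Theorem \ref{mainthm} is a statement about a single sphere, whereas here one encounters a union of spheres of radii $k\in[n-R,n]$ with unequal weights; this is handled by splitting the dominant part according to the value of $k$ and applying the single-sphere statement to each, using that there are only boundedly many ($R+1$) such values and their weights are comparable.

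The main obstacle I expect is making the concentration estimate in Step 1 fully uniform in $n$ rather than merely pointwise, and in particular handling the bottom of the spectrum correctly: the growth of $|S_k^G|$ is only \emph{coarsely} exponential (there may be subexponential fluctuations, which is exactly why Lemma \ref{expball} is invoked, and why the convergence of the Poincaré series for parabolics in Corollary \ref{convergent} matters for Theorem \ref{mainthm}), so the comparison $|S_k^G|\cdot|S_{n-k}^H|$ versus $|S_n^G|$ must be controlled by genuinely exponential gaps absorbing polynomial or subexponential errors. A secondary subtlety is that $H$ is an arbitrary finitely generated group, so $|S_m^H|$ need not be purely exponential either and could even have $\nu_{H,S_H}=0$ (e.g. $H$ finite or virtually $\mathbb Z$); in that degenerate case the argument simplifies since the $H$-contribution is at most polynomial, but the write-up should treat $\nu_{H,S_H}\ge 0$ uniformly, only using $\limsup_m \frac{\log|S_m^H|}{m} = \nu_{H,S_H} < \nu_{G,S_G}$, so that $|S_m^H| \le e^{(\nu_{H,S_H}+\delta)m}$ for all $m$ once $\delta$ is chosen with $\nu_{H,S_H}+\delta<\nu_{G,S_G}$.
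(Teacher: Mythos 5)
Your overall route is genuinely different from the paper's: you try to use Theorem \ref{mainthm} as a black box, combined with a concentration estimate saying that almost all of $S_n(G\times H)$ sits over $G$-spheres of radius $k\in[n-R,n]$. Your Step 1 is correct and in fact sharper than what the paper proves (its Lemma \ref{nullset} only discards $k\le tn$ for fixed $t<1$): since $|S_k(G)|\asymp e^{\nu_G k}$ by Lemma \ref{expball} and $|S_m(H)|\le C_\delta e^{(\nu_H+\delta)m}$ with $\nu_H+\delta<\nu_G$, the tail $\sum_{k\le n-R}|S_k(G)||S_{n-k}(H)|$ is at most a constant times $e^{\nu_G n}e^{-(\nu_G-\nu_H-\delta)R}$, while $|S_n|\ge|S_n(G)|\ge e^{\nu_G n}$, so the bad proportion is $O(e^{-cR})$ uniformly in $n$.

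The gap is in Step 2. A pair $(x,y)$ in your dominant set has \emph{two} radii: $g_1\in S_{k_1}(G)$ and $g_2\in S_{k_2}(G)$ with $k_1,k_2\in[n-R,n]$ and in general $k_1\ne k_2$. Theorem \ref{mainthm} is a statement about the uniform measure on $S_k(G)\times S_k(G)$ for a \emph{single} $k$; it says nothing about the average of $d_{S_G}(g_1,g_2)$ over $S_{k_1}(G)\times S_{k_2}(G)$ when $k_1\ne k_2$. Your proposed fix, ``splitting the dominant part according to the value of $k$,'' only handles the diagonal blocks $k_1=k_2$; the off-diagonal blocks have weight comparable to the diagonal ones (all $R+1$ radii carry comparable mass), so they cannot be discarded, and the single-sphere statement does not apply to them. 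This can be repaired, but it takes a real additional argument: e.g.\ first upgrade Theorem \ref{mainthm} via Markov's inequality and the pointwise bound $d\le 2k$ to ``all but an $\varepsilon$-proportion of pairs in $S_k(G)^2$ satisfy $d\ge(2-\varepsilon')k$,'' then compare $S_{k_2}(G)$ to $S_{k_1}(G)$ (say $k_1\le k_2$) by truncating geodesics $[1,g_2]$ at radius $k_1$ and controlling the multiplicity of that map by $|S_{k_2-k_1}(G)|$, using Lemma \ref{expball} again to see that the pushforward measure is dominated by a constant times the uniform measure on $S_{k_1}(G)$. The paper avoids this entirely by \emph{not} invoking Theorem \ref{mainthm} as a black box: it reruns the transition-point decomposition of Section 3 (the sets $C_{R+i}$, Lemmas \ref{crux2}, \ref{productlayer}, \ref{ThetanR2}) on the annular set $A_{tn,n}=\cup_{tn\le i\le n}S_i(G)\times S_{n-i}(H)$, which yields a \emph{pointwise} lower bound $d_{S_G}(g_1,g_2)\ge 2(tn-\rho n-R-D_1)$ for all but a negligible set of pairs, independently of which spheres $g_1,g_2$ lie on. Either supply the cross-sphere comparison sketched above or follow the paper's geometric rerun; as written, the reduction does not go through.
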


It is obvious that Theorem \ref{directproduct} can be thought of as a generalization of Theorem \ref{mainthm}. 

At last, we further derive the following corollary from Theorem \ref{directproduct}. Recall that a group is called of \textit{sub-exponential growth} if its growth rate is zero for some (thus any) generating set. It is well-known that a non-elementary relatively hyperbolic group has exponential growth. 
\begin{cor}\label{directproductcor}
A direct product of a non-elementary relatively hyperbolic group and a group of sub-exponential growth is statistically hyperbolic with respect to  finite split generating sets.
\end{cor}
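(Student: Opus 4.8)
The plan is to obtain this as an immediate consequence of Theorem \ref{directproduct}; the only thing to do is to observe that the growth hypothesis $\nu_{G,S_G}>\nu_{H,S_H}$ of that theorem holds automatically under the present assumptions. So let $G\times H$ be a direct product with $G$ non-elementary relatively hyperbolic and $H$ of sub-exponential growth, and let $S$ be a finite split generating set, with $S_G=S\cap G$ and $S_H=S\cap H$ as in Theorem \ref{directproduct}. First I would record the routine structural points: since $S=S^{-1}$ and $1\notin S$, the same holds for $S_G$ and $S_H$, both are finite, and --- because every generator lies in one of the two factors --- expressing $(g,1)$ and $(1,h)$ as $S$-words and projecting to the factors shows that $S_G$ generates $G$ and $S_H$ generates $H$. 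In particular $H$ is finitely generated, so its growth rate with respect to $S_H$ is defined.

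Next I would verify the inequality. Since $H$ has sub-exponential growth, its growth rate is zero for some (thus any) finite generating set, so in particular $\nu_{H,S_H}=0$. On the other hand, a non-elementary relatively hyperbolic group has exponential growth, so $\nu_{G,S_G}>0$. Hence $\nu_{G,S_G}>0=\nu_{H,S_H}$, and Theorem \ref{directproduct} yields that $(G\times H,S)$ is statistically hyperbolic.

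One degenerate situation should be separated off: if $S_H=\emptyset$ then $H$ is trivial, $G\times H\cong G$, and $S$ is just a finite generating set of $G$, so the conclusion follows from Theorem \ref{mainthm} rather than Theorem \ref{directproduct}. (Conversely $S_G$ is never empty, as $G$ is non-elementary, hence nontrivial.) I do not anticipate a genuine obstacle in this argument: all the substantive work --- the growth estimates, the convergence of the parabolic Poincar\'e series, and the averaging over spheres --- already lies inside the proofs of Theorems \ref{mainthm} and \ref{directproduct}. The only point needing attention is the bookkeeping above, namely confirming that $S_G$ and $S_H$ are honest finite generating sets so that ``exponential growth of $G$'' and ``sub-exponential growth of $H$'' may legitimately be invoked for these particular generating sets, together with the trivial-$H$ case in which $H$'s growth rate is not read off from positive-radius spheres.
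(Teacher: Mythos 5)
Your proposal is correct and matches the paper's intended derivation exactly: the paper states the corollary as an immediate consequence of Theorem \ref{directproduct}, using precisely the observations that sub-exponential growth gives $\nu_{H,S_H}=0$ while non-elementary relative hyperbolicity gives $\nu_{G,S_G}>0$. Your additional bookkeeping (that $S_G$, $S_H$ are honest finite generating sets, and the trivial-$H$ case) is harmless and only makes explicit what the paper leaves implicit.
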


This article is structured as follows. Section 2 prepares preliminary material to be used in the proof of Theorem \ref{mainthm}, which occupies the whole Section 3. In Section 4, we give a proof of Theorem \ref{directproduct}.

\ack
J. O. would like to thank Chris Hruska for directing the research of this topic in the context of his Ph.D. dissertation. Thanks also to Moon Duchin and Chris Mooney for originally formulating the question which ultimately led to this paper. W. Y. is supported by the Chinese grant ``The Recruitment Program of Global Experts". We are grateful to the referee for a careful reading and many useful comments and, in particular, for providing a simple argument to significantly shorten our proof of Lemma \ref{mainlayer}.

\section{Preliminaries}
Consider the Cayley graph $\Gx$ of $G$ with respect to $S$. Define $$B(1, n) = \{g \in G: d(1, g) \le n\}.$$ Let $S_n$ be the set of elements $g\in G$ such that $d(1, g)=n$. It will be useful to consider the spherical set in a subgroup $H$ in $G$. Define $$S_n(H) = H\cap S_n.$$

A parametrized path $p$ goes from $p_-$ to $p_+$ endowed with a natural order. For any two (parametrized) points $v, w \in p$, we denote by $[v, w]_p$ the segment between $v, w$ in $p$. As usual, $[v, w]$ denotes a (choice of) geodesic between $v, w$. Our path $p$ is often endowed with a length parameterization $p: [0, \len(p)] \to \Gx$.   

Let $p, q$ be two geodesics with the common initial endpoint $p_-=q_-$. A point $w\in q$ is called \textit{congruent} relative to $v\in p$  is satisfying that $d(v, p_-)=d(w, p_-)$.

Given a subset $X$ in $\Gx$,  the projection $\proj_X(v)$ of a point
$v$ to $X$ is the set of nearest points in $X$ to
$v$. For a subset $A \subset \Gx$, we define $\proj_X(A) = \cup_{a\in A
}\proj_X(a)$.

\subsection{Relative hyperbolicity and contracting property}
Given a finite collection of subgroups $\mathcal P$ in $G$, one can talk about the relative hyperbolicity of $G$ with respect to $\mathcal P$. From various points of view, the notion of relative hyperbolicity has been considered by many authors, cf. \cite{Gro}, \cite{Bow1}, \cite{Osin}, \cite{DruSapir}, and \cite{Ge1}, just to name a few.  These theories of relatively hyperbolic groups emphasize different aspects and are widely accepted to be equivalent for finitely generated groups. We refer the interested reader to \cite{Hru} and \cite{GePo3} for further discussions on their equivalence. 

In order to avoid heavy exposition, we only collect here necessary facts in the theory of relatively hyperbolic groups. Denote $\mathbb P =\{gP: g\in G, P\in \mathcal P\}$. Then $\mathbb P$ plays an important role in the geometry of $\Gx$, which has the following nice property.
\begin{defn}\label{contractdefn}
Let $\epsilon, D>0$. A subset $X$ is called \textit{$(\epsilon,
D)$-contracting} in $\Gx$ if the following holds
$$\diam{\proj_X(\gamma)} < D$$
for any geodesic $\gamma$ in $\Gx$ with $N_\epsilon(X) \cap \gamma =
\emptyset$.

A collection of $(\epsilon, D)$-contracting subsets is referred to as a
$(\epsilon, D)$-\textit{contracting system}. The constants $\epsilon, D$
will be often omitted, if no confusion happens. 
\end{defn}

We now recall some useful properties of contracting sets, and refer the reader to \cite{YANG6} for detailed discussions.
 
\begin{lem}[\cite{DruSapir}, \cite{GePo4}, \cite{YANG6}]\label{bipbpp}
Let $\GP$ be a relatively hyperbolic group. Then $\mathbb P$ is a contracting system with the following two
equivalent properties.
\begin{enumerate}
\item
\mbox{(bounded intersection property)} If for any $\epsilon >0$
there exists $R =\mathcal R(\epsilon)>0$ such that
$$
\diam{N_\epsilon(X) \cap N_\epsilon(X')} < R$$ for any two distinct
$X, X' \in \mathbb P$.
\item\mbox{(bounded projection property)} If there exists a finite number
$D>0$ such that
$$\diam{\proj_X(X')} < D$$
for any two distinct $X, X' \in \mathbb P$.
\end{enumerate}
\end{lem}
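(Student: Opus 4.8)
The plan is to separate the purely formal content of the statement---that for \emph{any} $(\epsilon,D)$-contracting system the properties (1) and (2) are equivalent---from the part that genuinely records relative hyperbolicity, namely that $\mathbb P$ is such a system satisfying one of them. For the equivalence, the implication $(2)\Rightarrow(1)$ uses nothing but the triangle inequality: given distinct $X,X'\in\mathbb P$ with $\diam{\proj_X(X')}<D$ and a point $u\in N_\epsilon(X)\cap N_\epsilon(X')$, I would choose $\bar u\in X$ and $\bar u'\in X'$ within $\epsilon$ of $u$; then $d(\bar u,\bar u')\le 2\epsilon$, so $\bar u$ lies within $4\epsilon$ of every point of $\proj_X(\bar u')\subseteq\proj_X(X')$. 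Doing the same for a second point $v\in N_\epsilon(X)\cap N_\epsilon(X')$ and invoking $\diam{\proj_X(X')}<D$ bounds $d(u,v)$ by $D$ plus a fixed multiple of $\epsilon$, which is the function $\mathcal R(\epsilon)$ required in (1).

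For $(1)\Rightarrow(2)$ I would first record two routine consequences of a set $Y\in\mathbb P$ being $(\epsilon,D)$-contracting: (a) a geodesic with both endpoints on $Y$ stays in a neighbourhood $N_{C_1}(Y)$ with $C_1=C_1(\epsilon,D)$ --- cut it along the maximal subsegments missing $N_\epsilon(Y)$, whose endpoints are $\epsilon$-close to $Y$, and apply the contracting inequality to each piece; and (b) the projection to $Y$ of a single point has uniformly bounded diameter, and if $d(\proj_Y(a),\proj_Y(b))$ exceeds a fixed constant then every geodesic $[a,b]$ meets $N_\epsilon(Y)$, and does so within bounded distance of both $\proj_Y(a)$ and $\proj_Y(b)$ (examine the first and last points of $[a,b]$ lying in $N_\epsilon(Y)$ and project the complementary initial and terminal subsegments). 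Now take distinct $X,X'\in\mathbb P$, points $x',y'\in X'$, and $p\in\proj_X(x')$, $q\in\proj_X(y')$; if $d(p,q)$ were large then so would be $d(\proj_X(x'),\proj_X(y'))$, so by (b) with $Y=X$ the geodesic $[x',y']$ would pass boundedly close to $p$ and to $q$ at points $a',b'\in N_\epsilon(X)$, while by (a) with $Y=X'$ the same geodesic lies in $N_{C_1}(X')$. Hence $a'$ and $b'$ both lie in $N_{C_1}(X)\cap N_{C_1}(X')$, whose diameter is at most $\mathcal R(C_1)$ by (1); feeding this back bounds $d(p,q)$, and therefore $\diam{\proj_X(X')}$, by a constant depending only on $\epsilon$, $D$ and $\mathcal R$.

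What is left is the substantive input: extracting from the relative hyperbolicity of $\GP$ a single pair $(\epsilon,D)$ making every $gP\in\mathbb P$ an $(\epsilon,D)$-contracting set in the sense of Definition~\ref{contractdefn}, together with (say) the bounded intersection property (1). This is exactly what \cite{DruSapir}, \cite{GePo4} and \cite{YANG6} establish, so in the paper I would cite those and merely recall the mechanism. Passing to the coned-off Cayley graph $\widehat G$, in which each $gP$ collapses to bounded diameter about a cone vertex $v_{gP}$, relative hyperbolicity amounts to saying that $\widehat G$ is hyperbolic and satisfies Farb's bounded coset penetration property; transversality of distinct cone vertices (fineness of $\widehat G$, equivalently tree-gradedness of the asymptotic cones) immediately gives (1). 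For the contracting property one argues by contradiction: if a geodesic $\gamma$ of $\Gx$ with $N_\epsilon(gP)\cap\gamma=\emptyset$ had $\diam{\proj_{gP}(\gamma)}\ge D$, pick $x,y\in\gamma$ projecting to $p,q\in gP$ with $d(p,q)$ close to $D$; then $[x,y]_\gamma$ and the detour $[x,p]\cup[p,q]\cup[q,y]$ map to two quasigeodesics of $\widehat G$ with the same endpoints, only the second of which penetrates $gP$, so bounded coset penetration caps the distance the second path travels in $gP$ --- forcing $d(p,q)$ to be bounded and contradicting the choice of $D$. I expect this passage between the electric geometry of $\widehat G$ and the word metric on $\Gx$ (which rests on geodesics of $\Gx$ projecting to uniform quasigeodesics of $\widehat G$, and on the usual ``without backtracking'' normalisations) to be the only real obstacle; once the contracting property and (1) are in hand, the equivalence established above finishes the proof.
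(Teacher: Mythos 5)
Your proposal is correct, but it does substantially more work than the paper, whose entire ``proof'' of this lemma is a string of citations: the contracting property to \cite[Proposition 8.2.4]{GePo4}, property (1) to \cite[Theorem 4.1]{DruSapir} and \cite[Proposition 5.1.4]{GePo4}, property (2) to \cite[Proposition 3.27]{GePo3}, and the equivalence of (1) and (2) to \cite[Lemma 2.3]{YANG6}. Your separation of the statement into a formal equivalence valid for any contracting system plus a substantive input from relative hyperbolicity is exactly the right way to read it, and both halves of your equivalence argument are sound: $(2)\Rightarrow(1)$ is the triangle-inequality computation giving $\mathcal R(\epsilon)=D+O(\epsilon)$, and $(1)\Rightarrow(2)$ via the quasi-convexity of contracting sets, the bounded diameter of point-projections, and the entry/exit analysis of $[x',y']$ relative to $N_\epsilon(X)$ is essentially the argument of \cite[Lemma 2.3]{YANG6} that the paper invokes. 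The one place where your route genuinely diverges from the cited sources is the mechanism you sketch for the contracting property itself: the coned-off Cayley graph and Farb's bounded coset penetration property is a legitimate third road, but \cite{DruSapir} works with asymptotic cones and tree-graded spaces while \cite{GePo4} works with Floyd functions, so your heuristic does not describe how those references actually argue; since you explicitly defer to the citations for that input (as the paper does), this is a difference of exposition rather than a gap. What your approach buys is a self-contained verification of the equivalence and of the logical independence of the two halves; what the paper's approach buys is brevity, at the cost of leaving the reader to reconcile the slightly different settings of the four cited results.
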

\begin{proof}
The contracting property was established in \cite[Proposition 8.2.4]{GePo4}. The property (1) was proved in \cite[Theorem 4.1]{DruSapir} and in \cite[Proposition 5.1.4]{GePo4}, and property (2) was in  \cite[Proposition 3.27]{GePo3}. The equivalence was shown in \cite[Lemma 2.3]{YANG6}.
\end{proof}

In the sequel, we will often invoke the function $\mathcal R$ without explicit mention of Lemma \ref{bipbpp}.

The following notion was introduced in \cite{Hru} by Hruska, and further elaborated on by Gerasimov-Potyagailo in \cite{GePo4}.
\begin{defn}
Fix $\epsilon, R>0$. Let $\gamma$ be a path in $\Gx$ and $v \in
\gamma$ a vertex. 
Given $X \in \mathbb P$, we say that $v$ is \textit{$(\epsilon,
R)$-deep} in $X$ if it holds that $\gamma \cap B(v, R) \subset
N_\epsilon(X).$ If $v$ is not $(\epsilon, R)$-deep in any $X \in
\mathbb P$, then $v$ is called an \textit{$(\epsilon, R)$-transition
point} of $\gamma$.
\end{defn}

In what follows, there exists a uniform constant $\epsilon_0>0$ such that Lemmas  \ref{uniformtrans}, \ref{thintriangle} and \ref{neargeodesic} hold.

The first lemma is a consequence of contracting property of $\mathbb P$ (without the assumption of relative hyperbolicity). See \cite[Lemma 2.9]{YANG7} for a proof.
\begin{lem}\label{uniformtrans}
Let $p$ be a geodesic and a point $v \in p$ be $(\epsilon, R)$-deep in some $X\in \mathbb P$ for $\epsilon\ge \epsilon_0, R=\mathcal R(\epsilon)$. Denote by $x, y$ the entry and exit point of $p$ in $N_\epsilon(X)$ respectively.  Then $x, y$ are $(\epsilon_0, R)$-transition points.
\end{lem}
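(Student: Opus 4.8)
The plan is to argue by contradiction, pitting two ``deep'' conditions against each other via the bounded intersection property of $\mathbb P$ (Lemma~\ref{bipbpp}): the hypothesis that $v$ is $(\epsilon,R)$-deep in $X$, and the negation of the desired conclusion for $x$. Since reversing the orientation of $p$ interchanges the entry and exit points, it suffices to show that $x$ is an $(\epsilon_0,R)$-transition point. We may assume $p$ genuinely enters $N_\epsilon(X)$, i.e.\ $p_-\notin N_\epsilon(X)$, which is implicit in calling $x$ an ``entry point''.

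The first step is to observe that $p$ already runs inside $N_\epsilon(X)$ for a stretch of length at least $R$ just before it reaches $v$. Indeed, $q:=p\cap B(v,R)$ is a subsegment of the geodesic $p$ containing $v$, and $q\subseteq N_\epsilon(X)$ because $v$ is $(\epsilon,R)$-deep in $X$; since $p_-\notin N_\epsilon(X)$, the segment $q$ does not reach $p_-$, so its initial point $w$ satisfies $d(v,w)=R$ and $w\in N_\epsilon(X)$. As $x$ is the first point of $p$ lying in $N_\epsilon(X)$, it occurs no later than $w$ along $p$, and therefore $d(v,x)\ge d(v,w)=R$, with $[x,v]_p\subseteq [x,y]_p\subseteq N_\epsilon(X)$. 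On the other hand, any point of $p$ strictly before $x$ lies outside $N_\epsilon(X)$, and we may choose such a point $x^-$ with $d(x^-,x)\le R$.

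Now suppose, for contradiction, that $x$ is $(\epsilon_0,R)$-deep in some $X'\in\mathbb P$, so that $p\cap B(x,R)\subseteq N_{\epsilon_0}(X')\subseteq N_\epsilon(X')$, using $\epsilon_0\le\epsilon$. If $X'=X$ this is already impossible, since $x^-\in p\cap B(x,R)$ but $x^-\notin N_\epsilon(X)$; hence $X'\ne X$. Let $z$ be the point of $p$ at distance exactly $R$ from $x$ on the subsegment $[x,v]_p$, which exists because $d(x,v)\ge R$. Then $[x,z]_p\subseteq [x,v]_p\subseteq N_\epsilon(X)$ and at the same time $[x,z]_p\subseteq p\cap B(x,R)\subseteq N_\epsilon(X')$, so $N_\epsilon(X)\cap N_\epsilon(X')$ contains a geodesic segment of length $R$. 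Thus $\diam{N_\epsilon(X)\cap N_\epsilon(X')}\ge R=\mathcal R(\epsilon)$, contradicting the bounded intersection property for the distinct cosets $X,X'$. Therefore $x$ is an $(\epsilon_0,R)$-transition point, and by the symmetry noted above so is $y$. The one place that needs care is the first step — upgrading the pointwise datum ``$v$ is deep in $X$'' to a genuinely long overlap $[x,v]_p$ inside $N_\epsilon(X)$ whose length is calibrated to the bounded intersection constant $\mathcal R(\epsilon)$; granting that, the clash with Lemma~\ref{bipbpp} is immediate.
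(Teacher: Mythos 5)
The paper itself does not prove this lemma; it defers to \cite[Lemma 2.9]{YANG7}. So your proposal has to be judged on its own terms, and on those terms it has a genuine gap, located precisely at the step you yourself flag as ``the one place that needs care''. You assert $[x,v]_p\subseteq [x,y]_p\subseteq N_\epsilon(X)$, but the inclusion $[x,y]_p\subseteq N_\epsilon(X)$ -- that a geodesic stays inside $N_\epsilon(X)$ between its first and last visits to $N_\epsilon(X)$ -- does not follow from any definition in play, and with the \emph{same} constant $\epsilon$ it is false in general. What the deepness of $v$ actually gives you, and what you correctly establish, is only that the subsegment $[w,v]_p$ of length $R$ ending at $v$ lies in $N_\epsilon(X)$ and that $x$ occurs no later than $w$, whence $d(x,v)\ge R$. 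But $x$ may a priori lie far before $w$, with $p$ making excursions out of $N_\epsilon(X)$ along $[x,w]_p$; in that case your length-$R$ segment $[x,z]_p$ need not lie in $N_\epsilon(X)$, and the only point you can certify to belong to $N_\epsilon(X)\cap N_\epsilon(X')$ is $x$ itself, which is not enough to contradict the bounded intersection property.

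The missing ingredient is the quasiconvexity of contracting sets: applying the $(\epsilon,D_0)$-contracting property of $X$ to each maximal excursion of $p$ out of $N_\epsilon(X)$ between two of its visits shows that every such excursion has length at most roughly $2\epsilon+D_0$, and hence $[x,y]_p\subseteq N_{\epsilon_1}(X)$ for some $\epsilon_1$ depending on $\epsilon$ and $D_0$ but strictly larger than $\epsilon$. With that supplied, your argument does go through, but it then produces a length-$R$ geodesic segment in $N_{\epsilon_1}(X)\cap N_{\epsilon}(X')$, so the clash with Lemma~\ref{bipbpp} requires $R\ge \mathcal R(\epsilon_1)$ rather than the hypothesized $R=\mathcal R(\epsilon)$. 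You must therefore either prove this quasiconvexity statement and track the enlarged constant (invoking $\mathcal R$ at the level $\epsilon_1$, which would require restating the hypothesis on $R$), or the proof is incomplete as written. The remainder of your argument -- the reduction to the entry point by reversing orientation, the elimination of the case $X'=X$ via the point $x^-$ outside $N_\epsilon(X)$, and the derivation of $d(x,v)\ge R$ from the point $w$ -- is correct.
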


The following lemma could be derived using techniques in Section 8 in \cite{Hru} or it follows from the proof of Proposition 7.1.1 in \cite{GePo3} in terms of Floyd distance.
\begin{lem}\label{thintriangle}
Let $\epsilon\ge \epsilon_0, R=\mathcal R(\epsilon_0)$. There exists $D=D(\epsilon, R)$ with the following property.

Consider a geodesic triangle consisting of three geodesics $p, q, r$ in $\Gx$. Let $v$ be an $(\epsilon, R)$-transition point in $r$. Then there exists an $(\epsilon, R)$-transition point $w\in p\cup q$ such that $d(v, w) < D$.
\end{lem}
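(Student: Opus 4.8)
The plan is to derive the statement from the geometry of a Floyd metric on $\Gx$, where ``thinness at a transition point'' becomes the triangle inequality combined with Karlsson's lemma. Fix a Floyd function $f$ for which $\GP$ has nontrivial Floyd boundary and for which every set $N_\epsilon(X)$, $X\in\mathbb P$, has diameter at most a constant $\kappa_1=\kappa_1(\epsilon)$ in the Floyd metric $\rho_u$ based at any vertex $u$, uniformly in $X$ and $u$; such $f$ exists by work of Gerasimov, cf.\ \cite{Ge1,GePo3}. We use two facts. \emph{Karlsson's lemma}: there is $\Phi$ with $\Phi(N)\to 0$ such that any $\Gx$-geodesic disjoint from $B(u,N)$ has $\rho_u$-length at most $\Phi(N)$; equivalently, a geodesic of $\rho_u$-length at least $\kappa$ meets $B(u,N(\kappa))$ for some $N(\kappa)<\infty$ depending only on $\kappa$ and $f$. \emph{Transition$\leftrightarrow$Floyd dictionary} (\cite[Prop.\ 7.1.1]{GePo3}): with $\epsilon_0$ and $R=\mathcal R(\epsilon_0)$ calibrated as in the excerpt, there are $\kappa_0>4\kappa_1$ and $T<\infty$ such that every $(\epsilon,R)$-transition point $v$ of a geodesic $[x,y]$ with $d(v,x),d(v,y)\ge T$ satisfies $\rho_v(x,y)\ge\kappa_0$. (One may instead run the same combinatorics in the coned-off graph $\mathscr{G}(G,S\cup\mathcal P)$, using its hyperbolicity and that $\Gx$-geodesics are quasigeodesics without backtracking there; this is in the spirit of Section 8 of \cite{Hru}.)

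Let the triangle have sides $r=[o_1,o_2]$, $p=[o_1,o_3]$, $q=[o_3,o_2]$, and let $v\in r$ be an $(\epsilon,R)$-transition point. If $v$ is within $T$ of $o_1$ (resp.\ $o_2$), take $w:=o_1\in p$ (resp.\ $w:=o_2\in q$): this endpoint is within $R$ of a transition point and $d(v,w)\le T$, so we are done. Otherwise $d(v,o_1),d(v,o_2)\ge T$, so $\rho_v(o_1,o_2)\ge\kappa_0$, and the triangle inequality gives $\rho_v(o_1,o_3)+\rho_v(o_3,o_2)\ge\kappa_0$; after relabelling, $\rho_v(o_1,o_3)\ge\kappa_0/2$. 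Since $p$ joins $o_1$ to $o_3$, its $\rho_v$-length is at least $\kappa_0/2$, so Karlsson's lemma yields a vertex $w_0\in p$ with $d(v,w_0)\le N_0:=N(\kappa_0/2)$.

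It remains to upgrade $w_0$ to a transition point of $p$ without losing control of the distance. If $w_0$ is an $(\epsilon,R)$-transition point of $p$, or lies within $R$ of an endpoint of $p$, we are done. Otherwise $w_0$ is $(\epsilon,R)$-deep in some $X\in\mathbb P$; let $x',y'$ be the first and last points of $p$ lying in $N_\epsilon(X)$, so that $x'\le w_0\le y'$ along $p$ and, by Lemma \ref{uniformtrans}, $x',y'$ are transition points of $p$. Since $x',y'\in N_\epsilon(X)$ we have $\rho_v(x',y')\le\kappa_1$, hence
\[
\kappa_0/2\le\rho_v(o_1,o_3)\le\rho_v(o_1,x')+\kappa_1+\rho_v(y',o_3),
\]
so one of $\rho_v(o_1,x')$, $\rho_v(y',o_3)$ is at least $\kappa_2:=\tfrac12(\kappa_0/2-\kappa_1)>0$; say $\rho_v(o_1,x')\ge\kappa_2$. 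Then the subgeodesic $[o_1,x']_p$ of $p$ has $\rho_v$-length at least $\kappa_2$, and Karlsson's lemma gives $z\in[o_1,x']_p$ with $d(v,z)\le N_1:=N(\kappa_2)$. As $z,x',w_0$ occur in this order along the geodesic $p$, we get $d(x',w_0)\le d(z,w_0)\le d(z,v)+d(v,w_0)\le N_0+N_1$, hence $d(v,x')\le d(v,w_0)+d(w_0,x')\le 2N_0+N_1$. Thus $w:=x'$ is the required $(\epsilon,R)$-transition point of $p$, and $D:=D(\epsilon,R)$ can be taken as the maximum of $T$, $N_0+R$ and $2N_0+N_1$.

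The geometric content above is slight; the real work, and the main obstacle, is calibrating the constants behind the first paragraph: one must choose $\epsilon_0$ (hence $R=\mathcal R(\epsilon_0)$) so that the dictionary holds with a threshold $\kappa_0$ comfortably larger than the uniform coset Floyd-diameter $\kappa_1$, and so that the entry/exit points delivered by Lemma \ref{uniformtrans} are transition points for the very pair $(\epsilon,R)$ occurring in the statement rather than for a shifted pair such as $(\epsilon_0,\mathcal R(\epsilon))$. Reconciling these slightly different notions of ``$(\epsilon,R)$-transition point'' (they differ only by bounded amounts), and checking that Karlsson's lemma is applied only to genuine subgeodesics — which it is, every piece used being a subpath of $p$ — is where the care goes.
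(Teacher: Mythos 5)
The paper never writes out a proof of Lemma \ref{thintriangle}; it only points to the techniques of Section 8 of \cite{Hru} or to the proof of Proposition 7.1.1 of \cite{GePo3} ``in terms of Floyd distance''. Your proposal is a genuine attempt at the second route, and its skeleton (transition point of $r$ $\Rightarrow$ definite Floyd separation $\rho_v(o_1,o_2)\ge\kappa_0$; triangle inequality; Karlsson's lemma to land on $p$; upgrade the landing point to an entry/exit point, which is transitional by Lemma \ref{uniformtrans}) is the correct one. But the step that carries all the weight is unjustified, and as stated it is false. You require a constant $\kappa_1=\kappa_1(\epsilon)$ bounding $\mathrm{diam}_{\rho_u}(N_\epsilon(X))$ \emph{uniformly over all basepoints} $u$, together with the calibration $\kappa_0>4\kappa_1$. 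No Floyd function achieves this: if $u$ lies on an infinite coset $X$, then for $a\in X$ adjacent to $u$ and $b\in X$ far from $u$, every path from $a$ to $b$ must cross every sphere $S(u,k)$ with $1\le k<d(u,b)$, so $\rho_u(a,b)\ge\sum_{k\ge 1}f(k)$ up to a bounded index shift --- the same order as the trivial upper bound $2\sum_k f(k)$ on $\rho_v(o_1,o_2)$, hence on $\kappa_0$. Worse, the basepoint at which you invoke the bound is precisely such a $u$: you use $\rho_v(x',y')\le\kappa_1$ while $d(v,X)\le N_0+\epsilon$ (since $d(v,w_0)\le N_0$ and $w_0\in N_\epsilon(X)$). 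What \cite{GePo3} actually supplies is a decay statement, $\mathrm{diam}_{\rho_u}\bigl(N_\epsilon(X)\setminus B(u,L)\bigr)\le\alpha(L)$ with $\alpha(L)\to 0$ uniformly in $u$ and $X$. Your argument is repaired by one extra case split: after $\kappa_0$ and $N_0$ are fixed, choose $L$ with $\alpha(L)<\kappa_0/4$; if $x'$ or $y'$ lies in $B(v,L)$ you are already done, and otherwise $\rho_v(x',y')\le\alpha(L)$ and your triangle-inequality computation goes through with $\kappa_2=\tfrac12(\kappa_0/2-\alpha(L))$.

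The near-endpoint case is also wrong as written. When $d(v,o_1)\le T$ you take $w=o_1$ and assert that ``this endpoint is within $R$ of a transition point'' of $p$; but an endpoint of a geodesic can be $(\epsilon,R)$-deep, and $p$ may have no transition point anywhere near $o_1$. For instance, if $o_3$ lies in a peripheral coset $X$ passing through $o_1$ and $d(o_1,o_3)$ is huge, essentially every point of $p=[o_1,o_3]$ is deep in $X$. In that configuration the transition point promised by the lemma lives on $q$, and locating it requires an actual argument (e.g.\ the contracting property of $X$ from Lemma \ref{bipbpp}: since $v$ is a transition point of $r$, the geodesic $r$ does not penetrate $N_\epsilon(X)$ deeply, so $\proj_X(o_2)$ is close to $o_1$, and the exit point of $q$ from $N_\epsilon(X)$ --- a transition point of $q$ by Lemma \ref{uniformtrans} --- lies a bounded distance from $v$). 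So: right strategy, and both defects are repairable with the correct form of the Gerasimov--Potyagailo visibility lemma, but the constant calibration you yourself flag as ``the real work'' is not merely delicate --- in the form you set it up, it cannot be done.
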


As a special case, we obtain the following result.
\begin{lem}\label{neargeodesic}
Let $\epsilon\ge \epsilon_0, R=\mathcal R(\epsilon_0)$. For any $r>0$, there exists $D=D(r)$ with the following property.

Let $p, q$ be two geodesics with $p_-=q_-$ and $d(p_+, q_+) \le r$. Consider an $(\epsilon, R)$-transition point $v \in p$. Then $d(v, q) \le D$.
\end{lem}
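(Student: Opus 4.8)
The plan is to realize this as a direct application of the thin‑triangle property for transition points, Lemma \ref{thintriangle}, to a degenerate triangle whose third side has length at most $r$.

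First I would fix any geodesic $[p_+, q_+]$ in $\Gx$; since $d(p_+, q_+)\le r$, we have $\len([p_+,q_+])\le r$. Because $p_-=q_-$, the three geodesics $p$, $q$, $[p_+,q_+]$ bound a geodesic triangle. Now invoke Lemma \ref{thintriangle} with $\epsilon\ge\epsilon_0$, $R=\mathcal R(\epsilon_0)$, taking the given $(\epsilon,R)$‑transition point $v$ to lie on the side $p$. This yields a constant $D_0=D(\epsilon,R)$ and an $(\epsilon,R)$‑transition point $w\in q\cup[p_+,q_+]$ with $d(v,w)<D_0$.

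Then I would split into two cases. If $w\in q$, we are done at once, since $d(v,q)\le d(v,w)<D_0$. If instead $w\in[p_+,q_+]$, then $d(w,q_+)\le\len([p_+,q_+])\le r$, and as $q_+\in q$ we obtain $d(v,q)\le d(v,w)+d(w,q_+)<D_0+r$. Setting $D=D(r):=D_0+r$ completes the argument.

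I do not expect any genuine obstacle: the proof is essentially immediate once Lemma \ref{thintriangle} is available. The only minor points to check are that the constants $\epsilon,R$ in the hypotheses of the two lemmas are compatible (they are, both taking $R=\mathcal R(\epsilon_0)$ with $\epsilon\ge\epsilon_0$), and that "transition point in $p$" in the statement coincides with "transition point in the side $p$" of the triangle, which holds because this notion depends only on the path $p$ and not on any ambient triangle.
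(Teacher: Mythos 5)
Your proof is correct and is exactly the argument the paper intends: the paper gives no explicit proof, simply noting that the lemma is ``a special case'' of Lemma \ref{thintriangle}, and your application of that lemma to the triangle with sides $p$, $q$, $[p_+,q_+]$ (absorbing the length-$\le r$ third side into the constant) is precisely that special case.
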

\begin{rem}
For convenience, it will be useful to take the congruent point $w\in q$ relative to $v\in p$ such that $d(v, w)\le D$ in the conclusion. In particular, $d(p_-, v)=d(p_-, w)$.
\end{rem}

\subsection{Exponential growth of balls}
We now consider a type of Poincar\`e series associated to a subset $A \subset G$ as follows,
$$
\PS{A} = \sum\limits_{a \in A} \exp(-s\cdot d(1, a)), s\ge 0.
$$
The \textit{critical exponent} $\nu_A$ of $\PS{A}$ is the limit
superior $$\nu_A=\limsup\limits_{n \to \infty} \frac{\log |B(1, n)
\cap A |}{n},$$ which can be thought of as the exponential growth rate of
$A$. Note that $\nu_G$ is the usual exponential rate $\g{G, S}$ of $G$
with respect to $S$. It is readily checked that $\PS{A}$ is convergent for $s > \nu_A$ and divergent for $s < \nu_A$.

Recall that a relatively hyperbolic group $G$ acts as a convergence group on its Bowditch boundary $\pG$, cf. \cite{Bow1}. Thus, every subgroup $H$ has a well-defined limit set $\Lambda(H) \subset \pG$, which consists of the set of accumulation points of all $H$-orbits in $\pG$. In \cite{YANG7}, the second-named author proves the following result.
\begin{lem}\label{convpara}\cite[Lemma 4.9]{YANG7}
Let $H$ be a subgroup in $G$ such that $\Lambda(H)$ is properly
contained in $\pG$. Then $\PS{H}$ is convergent at $s=\g G$. 
\end{lem}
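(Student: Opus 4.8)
The plan is to prove convergence by a direct argument on the Bowditch boundary, using the conformal density and Floyd-metric machinery developed in \cite{YANG7}. Let $\mu$ be a Patterson--Sullivan measure of dimension $\nu:=\nu_G$ on $\pG$, normalized so that $\mathrm{supp}\,\mu=\pG$; recall that $\mu$ transforms by the conformal rule $\tfrac{dh_*\mu}{d\mu}(\zeta)=e^{-\nu\beta_\zeta(h^{-1},1)}$, where $\beta$ is the Busemann cocycle on $\pG$.

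Since $\Lambda(H)$ is closed and properly contained in $\pG$, fix $\xi\in\pG\setminus\Lambda(H)$ and an open neighbourhood $\Pi_0$ of $\xi$ with $\overline{\Pi_0}\cap\Lambda(H)=\emptyset$; then $\mu(\Pi_0)>0$ because $\mathrm{supp}\,\mu=\pG$. The proof rests on two properties of the translates $h\Pi_0$, $h\in H$. First, from $\beta_\zeta(h,1)\le d(1,h)$ for all $\zeta\in\pG$ one gets, for free,
$$\mu(h\Pi_0)=\int_{\Pi_0}e^{-\nu\beta_\zeta(h,1)}\,d\mu(\zeta)\ \ge\ e^{-\nu d(1,h)}\,\mu(\Pi_0).$$
Second --- the crucial point --- the family $\{h\Pi_0:h\in H\}$ has bounded multiplicity: there is $M<\infty$ with $\#\{h\in H:\zeta\in h\Pi_0\}\le M$ for all $\zeta\in\pG$. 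Granting this, integrating over $\pG$ gives
$$\mu(\Pi_0)\cdot\mathcal P(\nu,H)=\mu(\Pi_0)\sum_{h\in H}e^{-\nu d(1,h)}\ \le\ \sum_{h\in H}\mu(h\Pi_0)\ =\ \int_{\pG}\#\{h:\zeta\in h\Pi_0\}\,d\mu(\zeta)\ \le\ M\,\mu(\pG)<\infty,$$
so $\mathcal P(s,H)$ converges at $s=\nu_G$. Specializing to $H=P\in\mathbb P$, whose limit set is a single parabolic point, recovers Corollary \ref{convergent}.

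So everything comes down to the bounded multiplicity, which is a ping-pong/convergence statement: if $h\Pi_0\cap h'\Pi_0\ne\emptyset$ then $g:=h^{-1}h'\in H$ satisfies $g\Pi_0\cap\Pi_0\ne\emptyset$, and for fixed $h$ the number of admissible $h'$ is $\#\{g\in H:g\Pi_0\cap\Pi_0\ne\emptyset\}$; it therefore suffices to show this last set is finite. The mechanism is that any far-out element $g$ moves $\Pi_0$ arbitrarily close to $\Lambda(H)$: since $g^{-1}\in H$ is a far-out point of the $H$-orbit, it lies Floyd-close to $\Lambda(H)$ (uniformly, by the very definition of the limit set), so the shadow in $\pG$ that $g$ expands --- a shadow around $g^{-1}$ --- sits near $\Lambda(H)$ and, once $d(1,g)$ is large, is disjoint from $\Pi_0$; the contraction estimates for the action on the Floyd boundary (cf. \cite{GePo4}, \cite{YANG7}) then place $g\Pi_0$ in a small neighbourhood of a point of $\Lambda(H)$, which is disjoint from $\overline{\Pi_0}$ by compactness. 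Hence only the finitely many $g\in H$ with $d(1,g)$ bounded can satisfy $g\Pi_0\cap\Pi_0\ne\emptyset$, and these number at most some $M$. The substantive difficulty --- the reason this is a theorem rather than a formality --- lies entirely in the input: the existence and conformality of $\mu$, and the uniform Floyd-contraction estimates for relatively hyperbolic groups, where geodesics may linger in parabolic cosets and must be controlled via the transition-point analysis of Section 2 (Lemma \ref{uniformtrans}); this is precisely the refinement of Coornaert's hyperbolic argument \cite{Coor} that \cite{YANG7} provides.
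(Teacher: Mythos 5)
The paper does not actually prove this lemma: it is imported verbatim from \cite[Lemma 4.9]{YANG7}, so there is no in-paper argument to compare yours against. That said, your proposal is essentially correct and is, in substance, the standard Patterson--Sullivan/conformal-density argument (and, as far as I can tell, the one underlying the cited reference). Two points deserve care. First, on the Bowditch boundary of a relatively hyperbolic group one only gets a \emph{quasi}-conformal density at the critical exponent: the Radon--Nikodym derivative is comparable to, not equal to, $e^{-\nu\beta_\zeta(h^{-1},1)}$, and the Busemann quantity is only a quasi-cocycle. But since you only use the one-sided bound $\mu(h\Pi_0)\ge C^{-1}e^{-\nu d(1,h)}\mu(\Pi_0)$ together with $\mu(\Pi_0)>0$ (which holds because the density is supported on the full limit set $\pG$ of $G$), this costs only a uniform multiplicative constant and is harmless. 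Second, the bounded multiplicity step, which you correctly identify as the crux, is cleanest via the convergence property of the action on $\pG$ rather than explicit Floyd contraction: if infinitely many distinct $g_n\in H$ satisfied $g_n\Pi_0\cap\Pi_0\ne\emptyset$, then after passing to a subsequence the $g_n$ have attracting and repelling points $a,b\in\Lambda(H)$; since $\overline{\Pi_0}$ is a compact set missing $b$, the sets $g_n\Pi_0$ converge uniformly into any neighbourhood of $a\in\Lambda(H)$ and eventually avoid $\Pi_0$, a contradiction. Your Floyd-metric phrasing of this step is morally the same and the required contraction estimates are indeed available in \cite{GePo4} and \cite{YANG7}, so I see no genuine gap; the argument stands, modulo the standard inputs you name.
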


Recall that every parabolic subgroup $P \in \mathcal P$ fixes a unique point in $\pG$, which coincides with the limit set $\Lambda(P)$. Lemma \ref{convpara} then applies and  the following result follows immediately.  

\begin{cor}\label{convergent}
The following series
$$
\sum\limits_{p \in P} \exp(-s\cdot d(1, p)) < \infty,
$$
or equivalently,
$$
\sum\limits_{n\ge 1} \exp(-sn) \cdot |S_n(P)| < \infty,
$$
for every $P \in \mathcal P$ and $s\ge \g G$.
\end{cor}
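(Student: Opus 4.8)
The plan is to obtain this as a direct consequence of Lemma \ref{convpara}, the only input being the description of the limit set of a parabolic subgroup. Recall that $G$ acts on its Bowditch boundary $\pG$ as a geometrically finite convergence group, and that each $P \in \mathcal P$ is (up to conjugacy) a maximal parabolic subgroup fixing a unique point $\xi_P \in \pG$; moreover $\Lambda(P) = \{\xi_P\}$ is precisely this single point (see \cite{Bow1}). Since $G$ is non-elementary, $\pG$ is a perfect set and in particular contains more than one point, so $\Lambda(P) = \{\xi_P\}$ is properly contained in $\pG$. Hence Lemma \ref{convpara} applies to the subgroup $H = P$ and yields that $\PS P = \sum_{p \in P} \exp(-s\cdot d(1,p))$ converges at $s = \g G$.

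Next I would upgrade convergence at $s = \g G$ to convergence for all $s \ge \g G$ by a termwise comparison: for each fixed $p$ the quantity $\exp(-s\cdot d(1,p))$ is non-increasing in $s$, so for $s \ge \g G$ the series $\PS P$ is dominated term by term by its value at $s = \g G$, which we have just shown to be finite. This gives the first displayed inequality.

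Finally, the equivalence of the two displayed forms is a mere regrouping: partitioning $P$ according to the value $n = d(1,p)$ rewrites $\sum_{p\in P}\exp(-s\cdot d(1,p))$ as $\sum_{n\ge 0}\exp(-sn)\cdot\abs{S_n(P)}$, and since $S_0(P)=\{1\}$ contributes exactly the constant term $1$, discarding it leaves the sum over $n\ge 1$; as every term is non-negative, the two series converge or diverge together.

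I do not expect any genuine obstacle here — the corollary is essentially a specialization of Lemma \ref{convpara}. The only point deserving a word of care is the identification $\Lambda(P)=\{\xi_P\}$ together with the fact that $\pG$ has more than one point when $G$ is non-elementary, both of which are standard features of the convergence action of a relatively hyperbolic group on its Bowditch boundary and for which I would simply cite \cite{Bow1}.
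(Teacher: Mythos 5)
Your proof is correct and follows essentially the same route as the paper: the authors likewise observe that each $P\in\mathcal P$ fixes a unique point of the Bowditch boundary coinciding with $\Lambda(P)$, so that $\Lambda(P)$ is properly contained in $\pG$ and Lemma \ref{convpara} applies directly. The extra details you supply (monotonicity in $s$ to pass from $s=\g G$ to all $s\ge \g G$, and the regrouping by spheres) are exactly the routine steps the paper leaves implicit in the word ``immediately.''
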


The following estimate is also important in the proof of Theorem \ref{mainthm}. The lower bound holds for any group, as a consequence of the sub-multiplicative inequality $|S_{n+m}|\le |S_n| |S_m|$.
\begin{lem}\label{expball}\cite[Theorem 1.8]{YANG7}
Let $G$ be a relatively hyperbolic group with a finite generating set $S$. Then there exists $c>1$ such that the following holds
\begin{equation}\label{expgrowth}
\exp(n \g G) \le |S_n| \le c \cdot \exp(n \g G).
\end{equation}
for any $n \ge 1$.
\end{lem}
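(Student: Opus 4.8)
The plan is to treat the two inequalities separately. The lower bound $\exp(n\nu_G)\le|S_n|$ is soft: cutting a geodesic from $1$ to an element of $S_{n+m}$ at its point at distance $n$ gives $|S_{n+m}|\le|S_n|\,|S_m|$, so $(\log|S_n|)_n$ is subadditive; by Fekete's lemma $\lim_n\tfrac{\log|S_n|}{n}$ exists and equals $\inf_n\tfrac{\log|S_n|}{n}$, and since $B(1,n)=\bigsqcup_{k\le n}S_k$ this common value is $\nu_G$. Hence $|S_n|\ge\exp(n\nu_G)$ for every $n$, with no hyperbolicity used. The content of the lemma is therefore the upper bound, which is the relatively hyperbolic analogue of Coornaert's pure--exponential--growth theorem for hyperbolic groups, and here I would run a Patterson--Sullivan/shadow argument on the Bowditch boundary $\pG$.

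First I would construct the measure. Fix $s_k\downarrow\nu_G$ and form the probability measures $\mu_k=\bigl(\sum_{g\in G}\exp(-s_k\,d(1,g))\bigr)^{-1}\sum_{g\in G}\exp(-s_k\,d(1,g))\,\dirac{g}$ on the compactification $\overline{G}=G\cup\pG$; after Patterson's trick of inserting a slowly growing weight (so that the construction works whether $\mathcal P(s,G)$ converges or diverges at $s=\nu_G$) any weak limit $\mu$ is a nonzero measure supported on $\pG$ that is $\nu_G$-conformal for the natural (quasi-)cocycle associated to the word metric. The relatively hyperbolic hypothesis enters decisively in verifying that $\mu$ is genuinely $\nu_G$-dimensional near the parabolic points: applying Lemma \ref{convpara} (equivalently Corollary \ref{convergent}) to each $P\in\mathcal P$ shows that the $\mu_k$-mass pushed into a peripheral coset $gP$ is summable in a way that survives the limit, so parabolic points are non-atomic and the $\mu$-measure of a shadow cast from a point of $S_n$ that lies deep inside some $gP$ is still $\lesssim\exp(-n\nu_G)$.

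Granting the conformal density, the next step is a shadow lemma: if $v$ is an $(\epsilon_0,R)$-transition point with $d(1,v)=n$, then the shadow $\Omega(v)\subset\pG$ of boundary points seen ``behind'' $v$ satisfies $\mu(\Omega(v))\asymp\exp(-n\nu_G)$ with implied constants depending only on $\epsilon_0$ and $R$; here Lemmas \ref{uniformtrans}, \ref{thintriangle} and \ref{neargeodesic}, together with the bounded intersection and bounded projection properties (Lemma \ref{bipbpp}), are what force geodesic rays to be funneled through $v$. Taking a maximal $R$-separated set of transition points at distance $n$ from $1$, their shadows are boundedly overlapping (again by Lemma \ref{bipbpp}), so there are at most $\asymp\exp(n\nu_G)$ of them. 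Finally, each $g\in S_n$ is either within bounded distance of some such transition point, or is $(\epsilon_0,R)$-deep in a peripheral coset $gP$ entered by the geodesic $[1,g]$; the number of the latter is bounded, level by level, by $\sum_m\exp(-m\nu_G)\,|S_m(P)|$, which converges by Corollary \ref{convergent}. Adding the two contributions gives $|S_n|\le c\exp(n\nu_G)$ with $c$ independent of $n$.

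The hard part is exactly this peripheral bookkeeping in the upper bound. In the hyperbolic case Coornaert's argument is clean because shadows of distinct radius-$n$ sphere points overlap uniformly boundedly; in the relatively hyperbolic case a geodesic can plunge deep into a coset $gP$ where it fans out, so a single transition point may be responsible for exponentially many nearby group elements, and one must distill from Lemma \ref{convpara} a decay estimate for the mass of these peripheral fans that is uniform over all $P\in\mathcal P$, all their cosets, and all depths --- this uniformity is precisely what makes the final constant $c$ independent of $n$.
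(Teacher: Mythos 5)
The paper does not actually prove this lemma: it is imported as \cite[Theorem 1.8]{YANG7}, and the only in-text justification is the one-sentence remark that the lower bound follows from the sub-multiplicative inequality $|S_{n+m}|\le |S_n|\,|S_m|$. Your lower-bound argument is exactly that remark made precise via Fekete's lemma, so there you coincide with the paper. For the upper bound --- the real content of the statement --- your Patterson--Sullivan/shadow outline reproduces the strategy of the cited reference rather than anything in this paper, and it is a faithful outline: a $\nu_G$-conformal density on the Bowditch boundary, a shadow lemma anchored at $(\epsilon_0,R)$-transition points, and the convergence of the parabolic Poincar\'e series (Lemma \ref{convpara}, Corollary \ref{convergent}) to control elements lying deep in peripheral cosets are indeed the ingredients that make Coornaert's argument survive relativization. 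But as written it is a plan, not a proof: the two-sided shadow estimate at transition points with constants depending only on $(\epsilon_0,R)$, and the uniform decay of the measure of ``peripheral fans'' (equivalently, non-atomicity at parabolic points), are precisely the technical core, and you assert them rather than establish them. One small quantitative slip: the count of elements of $S_n$ that are deep in a peripheral coset should be $\sum_m \exp\bigl((n-m)\nu_G\bigr)\cdot|S_m(P)|=\exp(n\nu_G)\sum_m\exp(-m\nu_G)\,|S_m(P)|$; the factor $\exp(n\nu_G)$ is missing from your statement, though the intended conclusion $|S_n|\le c\exp(n\nu_G)$ is unaffected. Since the paper uses the lemma as a black box, none of this bears on the paper's correctness; a self-contained proof would require carrying out the shadow-lemma machinery of \cite{YANG7} in full.
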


\section{Proof of Theorem \ref{mainthm}}
The proof is organized into two parts, of which the first is to decompose $S_n$ into the union of a sequence of $C_{R+i}$ sets, and then the second is to execute the calculation  $\sum d(x, y)$ following the decomposition. We begin with the definition of uniform constants used below.
\begin{constants}\label{Constants}
Recall that $\mathcal R$ is the function given by Lemma \ref{bipbpp}.
\begin{enumerate}
\item
Let $\epsilon>0$ satisfy Lemmas \ref{uniformtrans}, \ref{thintriangle} and \ref{neargeodesic}. Assume also that $\epsilon, D_0>0$ are the contracting constants for $\mathbb P$.
\item
Let $R_0=\mathcal R(\epsilon)$.
\item
Let $D_1=D(0)$ given by Lemma \ref{neargeodesic}.  We also demand that $D_1$ satisfies Lemma \ref{thintriangle}.
\end{enumerate}
\end{constants}

\subsection{Defining $C_{R+i}$ sets}
Fix any number $0 < \rho < 1/2$. We consider the sphere $S_{\rho n}$ for $n\ge 1$. For simplicity, assume that $\rho n$ is an integer. We will divide $S_n$ into disjoint well-controlled subsets.

Choose $R>\max\{2R_0, \mathcal R(2D_1)\}$. Let $C_R$ be the set of elements $g\in S_n$ such that there exists a geodesic $\gamma_g=[1, g]$ such that $\gamma_g$ contains an $(\epsilon, R_0)$-transition point in the (closed) $R$-neighbourhood of $\gamma_g(\rho n)$.

We consider any $g \in S_n \setminus C_R$. By definition of $C_R$, any geodesic $\gamma$ between $1$ and $g$ will not contain an $(\epsilon, R_0)$-transition point in the $R$-neighbourhood of $\gamma({\rho n})$. That is to say, the segment $\gamma([\rho n-R, \rho n+R])$ is contained in some $N_\epsilon(X_\gamma)$ for some $X_\gamma\in \mathbb P$. We first claim the following.
\begin{claim}
$X_\gamma$ is independent of the choice of $\gamma$.
\end{claim}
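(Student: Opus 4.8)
The plan is to argue by contradiction using the bounded intersection property of $\mathbb P$. Suppose $\gamma$ and $\gamma'$ are two geodesics from $1$ to $g$ whose associated parabolic cosets $X:=X_\gamma$ and $X':=X_{\gamma'}$ are distinct elements of $\mathbb P$. Since $R>R_0$, the midpoint $\gamma(\rho n)$ is $(\epsilon,R_0)$-deep in $X$, so it has well-defined entry and exit points $a,b$ along the maximal subpath of $\gamma$ lying in $N_\epsilon(X)$ that contains $\gamma(\rho n)$; define $a',b'$ for $\gamma'$ and $X'$ in the same way. Then $d(1,a),d(1,a')\le \rho n-R$ and $d(1,b),d(1,b')\ge \rho n+R$, and by Lemma \ref{uniformtrans} all four points are transition points of their respective geodesics.

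Next I would transfer the picture across the bigon $\{\gamma,\gamma'\}$. Applying Lemma \ref{neargeodesic} with $r=0$ (constant $D_1$) together with the Remark following it to the transition points $a,b$ of $\gamma$ and $a',b'$ of $\gamma'$, each has a congruent point on the opposite geodesic within distance $D_1$. Set $s_-:=\max\{d(1,a),d(1,a')\}$ and $s_+:=\min\{d(1,b),d(1,b')\}$, so that $s_-\le \rho n-R<\rho n+R\le s_+$. A short case check according to which of $\gamma,\gamma'$ attains $s_-$ (resp. $s_+$) gives $d(\gamma(s_-),\gamma'(s_-))\le D_1$ and $d(\gamma(s_+),\gamma'(s_+))\le D_1$, while on $[s_-,s_+]$ one has $\gamma\subset N_\epsilon(X)$ and $\gamma'\subset N_\epsilon(X')$. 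Hence the subgeodesic $\sigma:=\gamma|_{[s_-,s_+]}$ has both endpoints in $N_{\epsilon+D_1}(X')$.

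Finally I would invoke the contracting property of $X'$: a geodesic whose two endpoints lie in $N_{\epsilon+D_1}(X')$ is contained in $N_\delta(X')$ for a constant $\delta$ depending only on $\epsilon,D_0,D_1$. This is the standard argument — split $\sigma$ at its entry and exit points into $N_\epsilon(X')$ and bound the two outer pieces using $\diam{\proj_{X'}(\sigma)}<D_0$; when $R$ is large the alternative that $\sigma$ misses $N_\epsilon(X')$ entirely is excluded, since then $s_+-s_-=d(\sigma_-,\sigma_+)\le 2(\epsilon+D_1)+D_0$, whereas $s_+-s_-\ge 2R$. Consequently the length-$2R$ segment $\gamma([\rho n-R,\rho n+R])\subset\sigma$ lies in $N_\delta(X)\cap N_\delta(X')$, so $\diam{N_\delta(X)\cap N_\delta(X')}\ge 2R$, contradicting Lemma \ref{bipbpp}(1) provided $R>\mathcal R(\delta)/2$ — a lower bound on $R$ of the kind already imposed in Constants \ref{Constants}. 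Therefore $X_\gamma$ is the same for every geodesic $\gamma=[1,g]$.

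The only delicate point is the constant bookkeeping in the last paragraph: one must arrange that $\delta$ is small enough that the chosen lower bound on $R$ beats $\mathcal R(\delta)/2$, and, upstream, that the ``$(\epsilon,R)$-transition'' qualifiers produced by Lemma \ref{uniformtrans} have exactly the strength demanded by Lemma \ref{neargeodesic}; this compatibility is precisely what the choice of $\epsilon,R_0,D_1$ and $R$ in Constants \ref{Constants} is set up to guarantee. Every other step is a direct application of the lemmas recalled in Section 2.
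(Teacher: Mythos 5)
Your argument is correct and follows essentially the same route as the paper's proof: identify the entry/exit points as $(\epsilon,R_0)$-transition points via Lemma \ref{uniformtrans}, transfer them across the bigon using Lemma \ref{neargeodesic} and its Remark, and conclude with the bounded intersection property of Lemma \ref{bipbpp}. The only real difference is that your final detour through the contracting property of $X'$ (to push all of $\sigma$ into $N_\delta(X')$) is unnecessary: the two points $\gamma(s_-),\gamma(s_+)$ already lie in $N_{\epsilon+D_1}(X)\cap N_{\epsilon+D_1}(X')$ and are at distance at least $2R$, which is all that the bounded intersection property requires, and omitting that step lets you keep the paper's threshold $R>\mathcal R(2D_1)$ rather than the larger $\mathcal R(\delta)/2$.
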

\begin{proof}[Proof of Claim]
If not, we have that $\gamma, \gamma', X_\gamma, X_{\gamma'}$ satisfy the requirement as above. Note that $\gamma, \gamma'$ have the same endpoints.  Let $x_-, x_+$ be the entry and exit points of $\gamma$ in $N_\epsilon(X_\gamma)$ respectively. The points $y_-, y_+\in \gamma'$ are similarly defined for $X_{\gamma'}$. Thus by Lemma \ref{uniformtrans}, $x_-,\; x_+,\; y_-,\; y_+$ are $(\epsilon, R_0)$-transitional points. By Lemma \ref{neargeodesic}, it follows that $$d(x_-, \gamma'),\; d(x_+, \gamma'),\; d(y_-, \gamma),\; d(y_-, \gamma) \le D_1.$$ Clearly, by the Remark after Lemma \ref{neargeodesic}, we see that  $N_{2D_1}(X_\gamma) \cap N_{2D_1}(X_{\gamma'})$ has diameter at least $2R\ge \mathcal R(2D_1)$. This implies $X_\gamma=X_{\gamma'}$ by bounded intersection property of $\mathbb P$.
\end{proof}

Thus, in what follows, we omit the index $\gamma$ in $X_\gamma$.

Let $z$ be the entry point of $\gamma$ in $N_\epsilon(X)$. By Lemma \ref{uniformtrans}, $z$ is an $(\epsilon, R_0)$-transition point in $\gamma$.
We observe that such $z$ lies in a uniformly bounded ball.
 
\begin{lem}\label{projcenter}
For any $g\in S_n \setminus C_R$, there exists a point $x \in X$ such that any geodesic $\gamma=[1, g]$ satisfies $d(x, z) \le D_0+\epsilon$. In particular, the set of $z \in \gamma$ for all possible $\gamma=[1, g]$ is uniformly bounded.
\end{lem}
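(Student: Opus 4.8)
The plan is to take $x$ to be a nearest-point projection of the identity onto $X$, i.e.\ any fixed point $x\in\proj_X(1)$, and to check that this single point works for every geodesic $\gamma=[1,g]$. Recall that, by the Claim just proven, the contracting set $X=X_\gamma\in\mathbb P$ does not depend on the choice of $\gamma$; hence $x$ depends only on $g$ (and on an arbitrary choice within $\proj_X(1)$), as required.

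Fix a geodesic $\gamma=[1,g]$ and parametrize it by arclength, $\gamma\colon[0,n]\to\Gx$. Since $\gamma([\rho n-R,\rho n+R])\subset N_\epsilon(X)$, the path $\gamma$ does enter $N_\epsilon(X)$; let $t_0\in[0,\rho n-R]$ be the first time it does, so $z=\gamma(t_0)$ is the entry point and $d(z,X)\le\epsilon$. (If $t_0=0$, then $d(x,z)=d(1,X)\le\epsilon\le D_0+\epsilon$ and we are done, so assume $t_0>0$.) For every small $\delta>0$ the initial sub-geodesic $\gamma|_{[0,t_0-\delta]}$ is disjoint from $N_\epsilon(X)$, so by the $(\epsilon,D_0)$-contracting property of $X$ we have $\diam{\proj_X(\gamma|_{[0,t_0-\delta]})}<D_0$. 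In particular, choosing any $y\in\proj_X(\gamma(t_0-\delta))$ and recalling $x\in\proj_X(1)=\proj_X(\gamma(0))$, we get $d(x,y)<D_0$. On the other hand $d(\gamma(t_0-\delta),X)\le d(\gamma(t_0-\delta),z)+d(z,X)\le\delta+\epsilon$, whence $d(y,z)\le d(y,\gamma(t_0-\delta))+d(\gamma(t_0-\delta),z)\le\epsilon+2\delta$. Combining, $d(x,z)\le d(x,y)+d(y,z)<D_0+\epsilon+2\delta$, and letting $\delta\to0$ yields $d(x,z)\le D_0+\epsilon$.

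Since $x$ was chosen independently of $\gamma$, the last sentence of the lemma is immediate: every entry point $z$ arising from a geodesic $[1,g]$ lies in the ball $B(x,D_0+\epsilon)$, so the collection of all such $z$ has diameter at most $2(D_0+\epsilon)$.

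The argument is short once the preceding Claim is available; the only delicate point is that the contracting estimate applies to geodesics disjoint from $N_\epsilon(X)$, whereas $z$ itself lies on the sphere $\{\,y:d(y,X)=\epsilon\,\}$. This is why I truncate $\gamma$ slightly before $z$ and pass to the limit $\delta\to0$ rather than applying the contracting property directly to $[1,z]_\gamma$. I do not expect any further obstacle.
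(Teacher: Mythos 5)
Your proposal is correct and follows the same route as the paper: the paper's proof is the one-line observation that $x\in\proj_X(1)$ works by the contracting property, and your argument simply fills in the details (truncating $\gamma$ just before the entry point and letting $\delta\to 0$) of exactly that computation. No issues.
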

\begin{proof}
Let $x\in X$ be a projection point of $1$ to $X$. By the contracting property of $X$, we see that $d(z, x) \le D_0+\epsilon$.
\end{proof}

We subdivide $S_n \setminus C_R$ and define a sequence of subsets as follows. For $i\ge 1$, define $C_{R+i}$ to be the set of elements $g$ in $S_n \setminus C_R$ where the point $z \in \gamma$ defined as above is nearest to $1$ among all $\gamma=[1, g]$ and has an exact distance $(R+i)$ to $\gamma(\rho n)$. We require that $R+i \le \rho n$ for obvious reasons.

We note the following fact. 
\begin{lem}
$C_{R+i}\cap C_{R+j}=\emptyset$ for $i \ne j$.
\end{lem}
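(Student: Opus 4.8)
The plan is to show that for each $g \in S_n \setminus C_R$ the index $i$ with $g \in C_{R+i}$ (if it exists) is determined by an intrinsic quantity attached to $g$, so that $g$ cannot lie in two different $C_{R+i}$'s.

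First I would unwind the set-up for a fixed $g \in S_n \setminus C_R$. By the preceding Claim there is a \emph{single} $X = X_g \in \mathbb P$ such that every geodesic $\gamma = [1,g]$ has $\gamma([\rho n - R, \rho n + R]) \subset N_\epsilon(X)$, so ``the entry point $z_\gamma$ of $\gamma$ into $N_\epsilon(X)$'' is unambiguous, and by Lemma \ref{uniformtrans} each such $z_\gamma$ is an $(\epsilon, R_0)$-transition point of $\gamma$. The key elementary observation is the identity $d(z_\gamma, \gamma(\rho n)) = \rho n - d(1, z_\gamma)$: since $z_\gamma$ is the \emph{first} point of $\gamma$ lying in $N_\epsilon(X)$ and $\gamma(\rho n - R) \in N_\epsilon(X)$, we get $d(1,z_\gamma) \le \rho n - R < \rho n$, so $z_\gamma = \gamma(d(1,z_\gamma))$ precedes $\gamma(\rho n)$ on the geodesic $\gamma$ and the distance between them is the difference of parameters. (In fact, because $g \notin C_R$ the transition point $z_\gamma$ cannot lie in the closed $R$-ball about $\gamma(\rho n)$, whence the inequality is strict, $d(1,z_\gamma) < \rho n - R$; this is exactly what makes the range $i \ge 1$ in the definition consistent.)

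Next I would introduce $m(g) := \min\{\, d(1, z_\gamma) : \gamma = [1,g] \text{ a geodesic} \,\}$, a number depending only on $g$. If $g \in C_{R+i}$, then by definition $m(g)$ is realized by some geodesic $\gamma$ with $d(z_\gamma, \gamma(\rho n)) = R + i$, and the identity above gives $R + i = \rho n - m(g)$, i.e. $i = \rho n - R - m(g)$ is a function of $g$ alone. Hence $g$ cannot belong to both $C_{R+i}$ and $C_{R+j}$ with $i \ne j$, which is the assertion $C_{R+i} \cap C_{R+j} = \emptyset$.

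I do not expect a real obstacle; the only place needing a word of care is checking that the quantity $d(z_\gamma, \gamma(\rho n))$ is the same for every geodesic $\gamma$ attaining the minimum $m(g)$ — but this is immediate from the identity in the first step, since that distance is a function of $d(1, z_\gamma) = m(g)$ only — and recalling that the Claim guarantees $X_g$ is well-defined so that ``entry point'' makes sense.
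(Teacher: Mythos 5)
Your proof is correct and matches the paper's intent exactly: the paper states this lemma without any proof, treating it as an immediate consequence of the definition, and your argument — that the Claim makes the entry point well-defined, that $z_\gamma$ precedes $\gamma(\rho n)$ so $d(z_\gamma,\gamma(\rho n))=\rho n-d(1,z_\gamma)$, and hence that the index $i=\rho n-R-m(g)$ is a function of $g$ alone — is precisely the verification being taken for granted. No gaps.
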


By the above discussion, we have the following disjoint union for $S_n$,  $$(\cup_{i\ge 1} C_{R+i}) \cup C_R = S_n.$$

Recall that $\mathcal P=\{P_k: 1\le k\le m\}$ is a finite set.
The following estimate is crux in the remaining argument, saying that $C_R$ occupies the major part of $S_n$ for sufficiently large $R\gg 0$.

\begin{lem}\label{crux}
For any $\varepsilon>0$, there exists $R_1>0$ with the following property.  Let $R \ge R_1$ and $n\ge 1$ such that $\rho n >R$. Then the following holds
$$
\sum_{i\ge 1}  |C_{R+i}|/|S_n| \le \varepsilon,
$$
where $i\le \rho n -R$. 
\end{lem}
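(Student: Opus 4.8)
The plan is to bound $|C_{R+i}|$ by counting the elements $g \in S_n \setminus C_R$ that route through a given parabolic coset, and then sum over $i$ using the convergence of the parabolic Poincaré series (Corollary \ref{convergent}) together with the sharp two-sided ball estimate (Lemma \ref{expball}). First I would fix $g \in C_{R+i}$ and choose a geodesic $\gamma = [1,g]$ realizing the defining property: the entry point $z$ of $\gamma$ into $N_\epsilon(X)$, where $X \in \mathbb P$, is the nearest such entry point to $1$ over all geodesics $[1,g]$, and $d(z, \gamma(\rho n)) = R+i$. Let $y$ be the exit point of $\gamma$ from $N_\epsilon(X)$; by Lemma \ref{uniformtrans} both $z$ and $y$ are $(\epsilon, R_0)$-transition points. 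By Lemma \ref{projcenter}, $z$ lies within $D_0 + \epsilon$ of a fixed projection point $x \in X$ depending only on $g$ (in fact only on $X$ and the projection of $1$), so there are only boundedly many choices for $z$ once $X$ is fixed.

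The combinatorial heart of the argument is a decomposition of the geodesic $\gamma$ into three portions: the initial segment $[1,z]$, whose length is $\rho n - (R+i)$ up to a bounded error; the ``parabolic'' portion $[z,y]$ inside $N_\epsilon(X)$; and the terminal segment $[y,g]$. Since $z$ is determined up to a bounded ball by $X$, and $X = g_0 P_k$ for some coset, the portion $[z,y]$ is controlled by an element of the parabolic subgroup $P_k$ whose word length is roughly $d(z,y)$; fixing the exit point $y$ then leaves at most $|S_{n - \rho n - (R+i) - d(z,y) + O(1)}|$ choices for the tail, because $d(y, g) = n - d(1,z) - d(z,y)$ up to a bounded constant. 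Counting the number of initial segments (there are at most $|S_{\rho n - (R+i) + O(1)}|$, actually this is absorbed since $z$ is near the fixed $x$), one arrives at a bound of the shape
$$
|C_{R+i}| \;\le\; C \sum_{k=1}^{m} \sum_{j \ge R+i} |S_{j - (R+i)}(P_k)| \cdot |S_{n - (R+i) - (j - (R+i))}| ,
$$
where I have reindexed so that $j - (R+i)$ measures the depth traversed inside the coset. More carefully, writing $\ell = d(z,y)$ for the length spent in the coset, the count is $\lesssim \sum_{k} \sum_{\ell \ge 0} |S_{\ell + O(1)}(P_k)| \cdot |S_{n - \rho n - (R+i) - \ell + O(1)}|$.

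Now I would invoke Lemma \ref{expball}: $|S_{n - \rho n - (R+i) - \ell}| \le c\, e^{(n - \rho n - (R+i) - \ell)\nu_G}$ and $|S_n| \ge e^{n\nu_G}$, so that each summand contributes at most $c\, e^{-\rho n \nu_G} e^{-(R+i)\nu_G} e^{-\ell \nu_G} |S_{\ell + O(1)}(P_k)| \big/ e^{(n - \rho n)\nu_G}$ after dividing by $|S_n|$ — wait, more cleanly: $|C_{R+i}|/|S_n| \lesssim e^{-(R+i)\nu_G} \sum_k \sum_{\ell \ge 0} e^{-\ell \nu_G} |S_{\ell}(P_k)|$. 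The inner double sum is finite by Corollary \ref{convergent} applied at $s = \nu_G$, call it $M$. Therefore $\sum_{i \ge 1} |C_{R+i}|/|S_n| \lesssim M \sum_{i \ge 1} e^{-(R+i)\nu_G} = M e^{-R\nu_G}/(e^{\nu_G} - 1)$, which tends to $0$ as $R \to \infty$; choosing $R_1$ large enough to make this $\le \varepsilon$ finishes the proof, uniformly in $n$.

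The main obstacle I anticipate is making the counting step rigorous: I must ensure that the map sending $g \in C_{R+i}$ to the triple (initial data near $x$, parabolic element in $P_k$ of length $\approx \ell$, tail element in $S_{\approx n - \rho n - (R+i) - \ell}$) is at most boundedly-to-one, and that the length bookkeeping $d(1,z) + d(z,y) + d(y,g) = n + O(1)$ really holds with a constant independent of everything. This requires careful use of the fact that $z, y$ are transition points and that geodesics into and out of $N_\epsilon(X)$ are well-behaved — in particular that two distinct elements giving the same triple would have to agree up to bounded error, which follows from the bounded intersection property (Lemma \ref{bipbpp}) and the minimality in the definition of $z$. The summation over the exit point $y$, equivalently over $\ell$, is exactly where the convergence of the parabolic Poincaré series is indispensable and cannot be replaced by the trivial bound.
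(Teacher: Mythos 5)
Your overall strategy --- decompose a geodesic $[1,g]$ at the parabolic coset, count each piece by spheres of $G$ and of the parabolic subgroups, then combine Lemma \ref{expball} with Corollary \ref{convergent} --- is exactly the paper's. But your bookkeeping does not close up, and the final estimate depends on the errors. First, the claim that the initial-segment count ``is absorbed since $z$ is near the fixed $x$'' is not correct: the point $x$ of Lemma \ref{projcenter} depends on the coset $X$, and $X$ ranges over infinitely many elements of $\mathbb P$ as $g$ ranges over $C_{R+i}$; the factor $|S_{\rho n - R - i}|$ counting the possible entry points $z$ (equivalently, the possible cosets) is essential. Second, your three pieces do not have lengths summing to $n$: with $d(1,z)=\rho n-(R+i)$ and $d(z,y)=\ell$, the tail has length $n-\rho n+(R+i)-\ell$, not $n-\rho n-(R+i)-\ell$. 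The factor $e^{-(R+i)\nu_G}$ in your bound $|C_{R+i}|/|S_n|\lesssim e^{-(R+i)\nu_G}M$ is an artifact of this sign error. With the lengths corrected and the initial-segment factor restored, the exponentials telescope to give only $|C_{R+i}|/|S_n|\lesssim\sum_k\sum_{\ell\ge R+i}e^{-\ell\nu_G}|S_\ell(P_k)|$, and summing over $i\ge1$ then produces $\sum_k\sum_{\ell}(\ell-R)\,e^{-\ell\nu_G}|S_\ell(P_k)|$ --- a first-moment sum that Corollary \ref{convergent} does not control. (One can rescue this via submultiplicativity $|S_\ell(P_k)|\le|S_{R+i}(P_k)|\cdot|S_{\ell-R-i}(P_k)|$, which re-extracts the factor $e^{-(R+i)\nu_G}|S_{R+i}(P_k)|$, but you do not take this step.)

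The root of the trouble is that you track the exit point $y$ and sum over the full traversal length $\ell$, which is unnecessary. The paper cuts the geodesic at $\gamma(\rho n)$ rather than at $y$: since the entry point sits at parameter $\rho n-R-i$ and the geodesic stays in $N_\epsilon(X)$ up to parameter $\rho n$, the middle piece is an element of $B(1,\epsilon)P_kB(1,\epsilon)$ of length exactly $R+i$ up to $2\epsilon$, the tail is an arbitrary element of $S_{n-\rho n}$, and one gets
$$
|C_{R+i}|\le\sum_{1\le k\le m}|S_{\rho n-R-i}|\cdot|B(1,\epsilon)|^2\cdot|S_{R+i+2\epsilon}(P_k)|\cdot|S_{n-\rho n}|.
$$
Dividing by $|S_n|\ge e^{n\nu_G}$ and applying the upper bound of Lemma \ref{expball} to the two $G$-spheres yields $|C_{R+i}|/|S_n|\lesssim\sum_k e^{-(R+i)\nu_G}|S_{R+i}(P_k)|$ with no sum over $\ell$, and the sum over $i\ge1$ is then precisely a tail of the convergent series of Corollary \ref{convergent}, hence at most $\varepsilon$ for $R$ large, uniformly in $n$. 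Finally, your worry about the counting map being boundedly-to-one is moot: $g$ is literally the product of the three displacements, so the map to triples is injective, and no appeal to the bounded intersection property is needed here.
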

\begin{proof}
By definition of $C_{R+i}$, for any $g\in C_{R+i}$, there exists a geodesic $\gamma_g=[1, g]$ such that $\gamma_g([\rho n- R- i, \rho n]) \subset N_\epsilon(X)$ for some $X \in \mathbb P$.  It then follows that
$$
|C_{R+i}| \le \sum_{1\le k \le m} |S_{\rho n-R-i}| \cdot |B(1, \epsilon)|  \cdot |S_{R+i+2\epsilon}(P_k)| \cdot |B(1, \epsilon)|\cdot |S_{n-\rho n}|,
$$
where $R+i \le \rho n$. Note that $|S_{R+i+2\epsilon}(P_k)| \le |S_{R+i}(P_k)| \cdot |S_{2\epsilon}|$ for $1\le k \le m$.
By Corollary \ref{convergent}, the following series 
\begin{equation}\label{convparaequ}
\sum_{i\ge 1} |S_{R+i}(P)|\cdot \exp(-\g G (R+i)) < \infty,
\end{equation}
is convergent for each $P \in \mathcal P$.
The conclusion then follows as a combination of the estimate (\ref{expgrowth}) of Lemma \ref{expball} and the convergent series (\ref{convparaequ}).
\end{proof}

\subsection{Calculating the sum $\sum d(x, y)$}
We first calculate the sum $\sum d(x, y)$, where $y$ lies in $C_R$.

Denote $F=B(1, 2D_1)$ and $D_{n, R} = 2(n-\rho n-R -D_1)>0$ for later use.
\begin{lem}\label{mainlayer}

\begin{equation}\label{xysum1}
\sum_{x \in S_n, y\in C_R} d(x, y) \ge |C_R| \cdot (|S_n|-|F|\cdot |S_{n-\rho n+R}|) \cdot D_{n, R}.
\end{equation}
\end{lem}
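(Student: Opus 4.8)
The plan is to reduce, for each fixed $y\in C_R$, to a count of the ``bad'' points $x\in S_n$ lying within $D_{n,R}$ of $y$, and to estimate the sum trivially off this bad set. Writing $N_y$ for the number of $x\in S_n$ with $d(x,y)<D_{n,R}$, one has $\sum_{x\in S_n}d(x,y)\ge D_{n,R}\cdot(|S_n|-N_y)$, and summing over $y\in C_R$ yields \eqref{xysum1} as soon as
\[
N_y\le |F|\cdot|S_{n-\rho n+R}|\qquad\text{for every }y\in C_R .
\]
So the whole content is this bound.

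To prove it I would fix $y\in C_R$, a geodesic $\gamma_y=[1,y]$ and an $(\epsilon,R_0)$-transition point $v\in\gamma_y$ with $d(v,\gamma_y(\rho n))\le R$ (so $\rho n-R\le d(1,v)\le\rho n+R$), and set $o:=\gamma_y(\rho n-R)$, the point where $\gamma_y$ leaves the ball $B(1,\rho n-R)$. The key geometric claim is: \emph{every bad $x$ admits a geodesic $[1,x]$ whose exit point $[1,x](\rho n-R)$ from $B(1,\rho n-R)$ lies in $B(o,2D_1)$.} Granting this, the map $x\mapsto\bigl([1,x](\rho n-R),\,[1,x](\rho n-R)^{-1}x\bigr)$ is injective; its first coordinate ranges over $B(o,2D_1)$, of cardinality $|B(1,2D_1)|=|F|$, and its second lies in $S_{n-\rho n+R}$ since $[1,x](\rho n-R)$ sits on a geodesic from $1$ to $x$ at distance $\rho n-R$. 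Hence $N_y\le|F|\cdot|S_{n-\rho n+R}|$.

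For the claim, take a bad $x$, a geodesic $[1,x]$, and the geodesic triangle on $1,x,y$. I would apply Lemma \ref{thintriangle} to an $(\epsilon,R_0)$-transition point $\xi$ of $\gamma_y$ chosen thus: if $o$ is itself an $(\epsilon,R_0)$-transition point take $\xi:=o$; otherwise $o$ is $(\epsilon,R_0)$-deep in some $X\in\mathbb P$ and take $\xi$ to be the entry point of $\gamma_y$ into $N_\epsilon(X)$, a transition point by Lemma \ref{uniformtrans}; in both cases $d(1,\xi)\le\rho n-R$. Lemma \ref{thintriangle} provides a transition point $w\in[1,x]\cup[x,y]$ with $d(\xi,w)\le D_1$, and the form $D_{n,R}=2(n-\rho n-R-D_1)$ is what rules out $w\in[x,y]$: in that case $d(w,y)\ge d(\xi,y)-D_1\ge(n-\rho n+R)-D_1$, so $d(x,w)=d(x,y)-d(w,y)<n-\rho n-3R-D_1$ and hence $d(x,\xi)<n-\rho n-3R$, contradicting $d(x,\xi)\ge d(1,x)-d(1,\xi)\ge n-\rho n+R$. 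Thus $w\in[1,x]$, and since $w$ sits on $[1,x]$ with $|d(1,w)-d(1,\xi)|\le D_1$, when $\xi=o$ the point $[1,x](\rho n-R)$ is within $D_1$ of $w$ and hence within $2D_1$ of $o$; this settles the claim in that case.

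The hard part is the remaining case, where $o$ is $(\epsilon,R_0)$-deep in a peripheral coset $X$: then the transition point $\xi=a$ one controls is the entry point of $\gamma_y$ into $N_\epsilon(X)$, sitting at distance strictly less than $\rho n-R$ from $1$, and the control must be propagated forward to parameter $\rho n-R$. I would handle this by running the same triangle-and-threshold argument also with $v$ in place of $\xi$ (it applies since $d(1,v)\le\rho n+R$), so that $[1,x]$ carries a second transition point within $D_1$ of $v$; since $\gamma_y$ stays inside $N_\epsilon(X)$ along the whole stretch from $a$ to (essentially) $v$, the bounded-intersection and contracting properties of $\mathbb P$, together with Lemma \ref{neargeodesic} and the choice $R>\max\{2R_0,\mathcal R(2D_1)\}$, force the arc of $[1,x]$ between its two transition points to follow $\gamma_y$'s excursion through $N_\epsilon(X)$, keeping $[1,x](\rho n-R)$ within $2D_1$ of $o$. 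This is the same kind of estimate already used to show that $X_\gamma$ is well defined; the delicate point is arranging the constants of Constants \ref{Constants} so that the bound comes out with exactly the radius $2D_1$ appearing in the definition of $F$.
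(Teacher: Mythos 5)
Your overall scheme---for each $y\in C_R$ bound the number of $x\in S_n$ with $d(x,y)<D_{n,R}$ and estimate the rest of the sum trivially---is exactly the paper's, and your threshold computation ruling out $w\in[x,y]$ is correct (the paper reaches the same dichotomy a little differently: it notes that $d(x,z)\ge d(y,z)$ for $x,y\in S_n$ and $z\in[1,y]$, so that a transition point $w\in[x,y]$ with $d(z,w)\le D_1$ forces $d(x,y)=d(x,w)+d(w,y)\ge D_{n,R}$ directly, with no argument by contradiction). The genuine gap is in your key claim, precisely in the case you flag as hard. When $o=\gamma_y(\rho n-R)$ is $(\epsilon,R_0)$-deep in some $X\in\mathbb P$, the conclusion that $[1,x](\rho n-R)\in B(o,2D_1)$ is not attainable, and the sketched justification misuses the tools: the contracting property controls projections of geodesics that stay \emph{outside} $N_\epsilon(X)$, and bounded intersection compares two \emph{distinct} peripheral cosets; neither constrains how two geodesics travel \emph{inside} $N_\epsilon(X)$. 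If $X$ is a coset of $\mathbb Z^2$, two geodesics whose entry and exit points into $N_\epsilon(X)$ are $2D_1$-close can be at distance comparable to the length of the excursion at the intermediate parameter $\rho n-R$, so no radius independent of $n$ works. The salvageable version of your argument---run the triangle-and-threshold step at the transition point $v$ itself, getting $w_v\in[1,x]$ with $d(v,w_v)\le D_1$, whence $[1,x](\rho n-R)\in B(o,4R+2D_1)$---proves the inequality only with $F$ replaced by an $R$-dependent ball; that weaker statement would in fact still suffice for Lemma \ref{ThetanR} (one fixes $R$ and lets $n\to\infty$), but it is not the inequality (\ref{xysum1}) as stated.

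The paper sidesteps all of this by never trying to locate $[1,x]$ at the exact parameter $\rho n-R$. It centers the count at the transition point $z$ itself (your $v$), which by definition of $C_R$ sits at parameter in $[\rho n-R,\rho n+R]$: the bad set is contained in $A=\{x\in S_n:\ d(z,[1,x])\le D_1\ \text{for some}\ [1,x]\}$, and such an $x$ factors as $u\cdot(u^{-1}x)$ with $u\in B(z,D_1)$ and $u^{-1}x$ on a sphere of radius $n-d(1,u)\le n-\rho n+R+D_1$, which gives $|A|\le C\cdot|S_{n-\rho n+R}|$ for a constant $C$ depending only on $D_1$ and the growth constants of Lemma \ref{expball}---all the ensuing argument needs. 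Replacing your key claim by this $z$-centered count makes your case analysis on whether $o$ is deep disappear entirely, and the rest of your write-up (the injective factorization and the trivial lower bound off the bad set) goes through unchanged.
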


\begin{proof}
For any $y \in C_R$, there exists a geodesic $\gamma_y=[1, y]$ such that $\gamma_y$ contains an $(\epsilon, R_0)$-transition point $z$ in the $R$-neighbourhood of $\gamma_y({\rho n})$. We can assume further that $z$ is nearest to $\gamma_y({\rho n-R})$ among all such $\gamma_y=[1, y]$. Then $d(\gamma_y({\rho n}), z) \le R$.


We consider two sets $A, B$ of elements in $S_n$ separately. Let $A$ be the set of elements $x \in S_n$ such that $d(z, [1, x]) \le D_1$ for some $[1, x]$.  Thus it follows that 
\begin{equation}\label{excludeset1}
|A| \le |F|\cdot |S_{n-\rho n+R}|.
\end{equation}

Let $B=S_n\setminus A$. For any $x$ in $B$, we have $d(z, [1, x]) > D_1$, and then $d(z, [x, y]) \le D_1$ by Lemma \ref{thintriangle}. Observe that $d(x, z)  \ge d(y, z)$. Indeed, if $d(x, z) < d(y, z)$, then $d(1, x) \le d(x, z) + d(z, 1) < d(y, z) + d(z, 1) = d(1, y) $. This is a contradiction, since $x, y\in S_n$.

Let $w \in [x, y]$ such that $d(z, w) \le D_1$.  Note that $d(z, y) \ge n-\rho n -R$.  Thus, $\min\{d(y, w), d(x, w)\} \ge n-\rho n -R -D_1$. Therefore, the inequality (\ref{xysum1}) holds. 
\end{proof}

We now estimate the sum $\sum d(x, y)$ where $y \in C_{R+i}, i \ge 1$. The same proof as Lemma \ref{mainlayer} for the case $i=0$ proves the following.

\begin{lem}\label{otherlayer}
For each $i \ge 1$ with $R+i \le \rho n$, we have
\begin{equation}\label{xysum2}
\begin{array}{ll}
\sum_{x \in S_n, y\in C_{R+i}} d(x, y) \ge 
|C_{R+i}| \cdot (|S_n|-|F|\cdot |S_{n-\rho n+R+i}|)\cdot D_{n, R}.
\end{array}
\end{equation}
\end{lem}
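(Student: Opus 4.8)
The plan is to run the argument of Lemma~\ref{mainlayer} essentially verbatim, carrying the offset $i$ through all the estimates. Fix $i\ge 1$ with $R+i\le\rho n$ and take $y\in C_{R+i}$. By the definition of $C_{R+i}$ there is a geodesic $\gamma_y=[1,y]$ whose entry point $z$ into some $N_\epsilon(X)$, $X\in\mathbb P$, is an $(\epsilon,R_0)$-transition point (by Lemma~\ref{uniformtrans}) with $d(\gamma_y(\rho n),z)=R+i$; equivalently $d(1,z)=\rho n-R-i$ and $d(z,y)=n-\rho n+R+i$. As in Lemma~\ref{mainlayer}, I would split $S_n=A\cup B$, where $A$ is the set of those $x\in S_n$ admitting a geodesic $[1,x]$ with $d(z,[1,x])\le D_1$, and $B=S_n\setminus A$.

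First I would bound $|A|$. For $x\in A$ let $u$ be the point of the relevant geodesic $[1,x]$ at distance $d(1,z)=\rho n-R-i$ from $1$. Since $u$ and the point of $[1,x]$ realizing $d(z,[1,x])\le D_1$ both lie on $[1,x]$ and their distances to $1$ differ by at most $D_1$, we get $d(u,z)\le 2D_1$, while $d(u,x)=n-(\rho n-R-i)=n-\rho n+R+i$. Thus $u$ ranges over $B(z,2D_1)$, a set of cardinality $|F|$, and for each such $u$ there are at most $|S_{n-\rho n+R+i}|$ vertices at the prescribed distance from $u$; hence $|A|\le |F|\cdot|S_{n-\rho n+R+i}|$, so $|B|\ge |S_n|-|F|\cdot|S_{n-\rho n+R+i}|$.

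Next, for $x\in B$ we have $d(z,[1,x])>D_1$, so Lemma~\ref{thintriangle} applied to the triangle $\Delta(1,x,y)$ produces $w\in[x,y]$ with $d(z,w)\le D_1$. Since $z\in[1,y]$, the same one-line comparison as in Lemma~\ref{mainlayer} gives $d(x,z)\ge d(y,z)$ (otherwise $x\notin S_n$). Consequently both $d(x,w)\ge d(x,z)-D_1$ and $d(w,y)\ge d(z,y)-D_1$ are at least $n-\rho n+R+i-D_1\ge n-\rho n-R-D_1$, so $d(x,y)=d(x,w)+d(w,y)\ge D_{n,R}$. Summing over $x\in B$ for fixed $y$, and then over $y\in C_{R+i}$, yields (\ref{xysum2}).

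I do not expect a genuine obstacle here; the work is purely in the bookkeeping of the index $i$. The one substantive difference from the case $i=0$ is that $z$ now lies at distance \emph{exactly} $R+i$ behind $\gamma_y(\rho n)$, rather than merely within $R$ of $\gamma_y(\rho n)$, which is what turns the $|S_{n-\rho n+R}|$ of (\ref{xysum1}) into $|S_{n-\rho n+R+i}|$ in (\ref{xysum2}); and one should check that the distance estimate still gives $d(x,y)\ge D_{n,R}$, which it does since the resulting lower bound $2(n-\rho n+R+i-D_1)$ exceeds $D_{n,R}=2(n-\rho n-R-D_1)$ for every $i\ge 1$.
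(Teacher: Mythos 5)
Your proposal is correct and follows essentially the same route as the paper, which proves this lemma simply by observing that the argument of Lemma \ref{mainlayer} carries over verbatim with the offset $i$. Your bookkeeping --- the exact distance $d(1,z)=\rho n-R-i$, the count $|A|\le |F|\cdot |S_{n-\rho n+R+i}|$, and the lower bound $d(x,y)\ge 2(n-\rho n+R+i-D_1)\ge D_{n,R}$ for $x\in B$ --- is precisely what the paper intends.
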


We are ready to finish the proof of Theorem \ref{mainthm}.
Combinning all of above inequalities in Lemmas \ref{mainlayer}, \ref{otherlayer} and \ref{crux}, we obtain the following
\begin{equation}\label{xysum}
\begin{array}{ll}
&\sum_{x, y\in S_n} d(x, y) \\
&\\
= &\sum_{i\ge 0}\sum_{x \in S_n, y\in C_{R+i}} d(x, y) \\
&\\
\ge & \sum_{i\ge 0} |C_{R+i}| \cdot (|S_n|-|F|\cdot |S_{n-\rho n+R+i}|)\cdot D_{n, R}\\
&\\
\ge & (|S_n|^2 - \sum_{i\ge 0} |F|\cdot  |C_{R+i}| \cdot |S_{n-\rho n+R+i}|) \cdot D_{n, R}.
\end{array}
\end{equation}
Therefore,  
$$
\frac{1}{|S_n|^2} \sum_{x, y\in S_n} \frac{d(x, y)}{n}\ge 2(1- \theta(n, R))(1-\rho - \frac{R+D_1}{n}).
$$
where 
$$
\theta(n, R) = (\sum_{i\ge 0} |F|\cdot  |C_{R+i}| \cdot |S_{n-\rho n+R+i}|)/|S_n|^2.
$$
Observe that 
\begin{lem}\label{ThetanR}
For any $\varepsilon>0$, there exists $R_1>0$ with the following property.
Let $R\ge R_1$ and $n \ge 1$ such that $\rho n  \ge R+ R_1$. Then
$\theta(n, R) \le \varepsilon$. 
\end{lem}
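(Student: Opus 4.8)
The plan is to bound $\theta(n,R)$ term-by-term using the two-sided exponential growth estimate of Lemma \ref{expball} together with the convergence of the parabolic Poincar\'e series from Corollary \ref{convergent}, in exactly the same spirit as the proof of Lemma \ref{crux}. First I would separate the $i=0$ term from the tail $\sum_{i\ge 1}$. For $i=0$ the contribution to $\theta(n,R)$ is $|F|\cdot|C_R|\cdot|S_{n-\rho n+R}|/|S_n|^2$; using $|C_R|\le|S_n|$ and the upper bound $|S_{n-\rho n+R}|\le c\exp((n-\rho n+R)\nu_G)$ together with the lower bound $|S_n|\ge\exp(n\nu_G)$ from Lemma \ref{expball}, this term is at most $c\,|F|\exp((R-\rho n)\nu_G)$, which tends to $0$ as soon as $\rho n - R\to\infty$; so it is $\le\varepsilon/2$ once $\rho n\ge R+R_1$ for $R_1$ large.

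For the tail, recall from the proof of Lemma \ref{crux} that for each $i\ge 1$ we have $|C_{R+i}|\le \sum_{1\le k\le m}|S_{\rho n-R-i}|\cdot|B(1,\epsilon)|^2\cdot|S_{R+i}(P_k)|\cdot|S_{2\epsilon}|\cdot|S_{n-\rho n}|$. Combining this with $|S_{n-\rho n+R+i}|\le|S_{n-\rho n}|\cdot|S_{R+i}|\le c^2|S_{n-\rho n}|\exp((R+i)\nu_G)$ and $|S_{\rho n-R-i}|\le c\exp((\rho n-R-i)\nu_G)$, the product $|C_{R+i}|\cdot|S_{n-\rho n+R+i}|$ is bounded by a uniform constant times $|S_{n-\rho n}|^2\exp((\rho n -R-i)\nu_G)\cdot|S_{R+i}(P_k)|\cdot\exp((R+i)\nu_G)$, summed over $k$. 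Dividing by $|S_n|^2\ge\exp(2n\nu_G)$ and using $|S_{n-\rho n}|\le c\exp((n-\rho n)\nu_G)$, the exponential factors collapse: the net exponent is $2(n-\rho n)+(\rho n-R-i)+(R+i)-2n=-\rho n$, wait — I instead bound $|S_{n-\rho n}|^2/|S_n|^2\le c^2\exp(-2\rho n\,\nu_G)$ and keep $\exp((\rho n-R-i)\nu_G)\cdot\exp((R+i)\nu_G)=\exp(\rho n\,\nu_G)$, leaving $\exp(-\rho n\,\nu_G)$ times $\sum_k|S_{R+i}(P_k)|$. Hmm, that is not summable in $i$; so instead I keep the factor $|S_{R+i}(P_k)|\exp(-(R+i)\nu_G)$ intact and absorb the remaining exponentials. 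The clean bookkeeping is: $\theta(n,R)\le C\sum_{1\le k\le m}\sum_{i\ge 1}|S_{R+i}(P_k)|\exp(-(R+i)\nu_G)\cdot\exp(-(\rho n-R-i)\nu_G+\text{(nonpositive terms)})$; since $\rho n\ge R+i$ the last factor is at most $1$, so $\theta(n,R)\le C\sum_{1\le k\le m}\sum_{i\ge 1}|S_{R+i}(P_k)|\exp(-(R+i)\nu_G)$. By Corollary \ref{convergent} each inner series is the tail of a convergent series, hence $\to 0$ as $R\to\infty$; choosing $R_1$ so that this is $\le\varepsilon/2$ for all $R\ge R_1$ completes the estimate.

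The main obstacle is the bookkeeping of the exponential factors: one must be careful to pair $|S_{R+i}(P_k)|$ with $\exp(-(R+i)\nu_G)$ so that the parabolic series is visibly the tail of the convergent series in Corollary \ref{convergent}, while checking that all leftover exponential factors are bounded by $1$ under the hypothesis $R+i\le\rho n$ (using the sub-multiplicativity $|S_{a+b}|\le|S_a||S_b|$ and the two-sided bound of Lemma \ref{expball} to replace every spherical cardinality by $\exp(\cdot\,\nu_G)$ up to a uniform multiplicative constant). Once the telescoping of exponents is set up correctly, both the $i=0$ term (which decays in $n$) and the tail (which decays in $R$, uniformly in $n$ subject to $\rho n\ge R+R_1$) are each $\le\varepsilon/2$, giving $\theta(n,R)\le\varepsilon$.
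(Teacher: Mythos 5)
Your proposal is correct and follows essentially the same route as the paper: split off the $i=0$ term and kill it with the two-sided growth estimate of Lemma \ref{expball}, then control the tail $\sum_{i\ge 1}$ via the convergent parabolic Poincar\'e series of Corollary \ref{convergent}. The only cosmetic difference is that the paper handles the tail by citing Lemma \ref{crux} directly (implicitly using $|S_{n-\rho n+R+i}|\le c\,|S_n|$ since $R+i\le\rho n$), whereas you inline that lemma's exponent bookkeeping; both reduce to the same tail estimate.
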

\begin{proof} 
We first consider the sum with $i=0$. Note that $C_{R} \subset S_n$. By Lemma \ref{expball}, there exists a uniform constant $\kappa>0$ such that 
$$
\frac{|F|\cdot |C_{R}| \cdot |S_{n-\rho n+R}|}{|S_n|^2} \le \frac{\kappa}{\exp(\g G(\rho n- R))}.
$$
Choose $R_1>0$ such that  $\kappa/\exp(\g G R_1) \le \epsilon/2$.

Consider now the sum with $i>0$. By Lemma \ref{crux}, we choose also $R_1>0$ such that $$
\sum_{i\ge 1}  |F|\cdot |C_{R+i}|/|S_n| \le \varepsilon/2,
$$
for $R>R_1$ and $i \le \rho n - R$. This clearly concludes the proof of the lemma.
\end{proof}
Thus, for any $\varepsilon>0$, we choose $R>0$ and let $n\to \infty$ to get $E(G, S) \ge 2(1-\varepsilon)(1-\rho)$.
As $\varepsilon, \rho$ are arbitrary, we then obtain that $E(G, S) =2$. 

The proof of Theorem \ref{mainthm} is complete.

\section{Statistical hyperbolicity of direct products}
This section is devoted to the proof of Theorem \ref{directproduct}. The outline is almostly the same as the proof of Lemma 5 (Annulus lemma) in \cite{DLM}, which is only sketched there. We provide here the details since we considered one relatively hyperbolic factor in $G\times H$, and our estimates in the proof of Theorem \ref{mainthm} are much more involved. 

We consider the direct product $G\times H$ with a split generating set $S$. Let $d$ be the word metric on $G\times H$ with respect to $S$. 

Denote $S_G=S\cap G$ and $S_H=S\cap H$. Then $S_G$ and $S_H$ generate $G$ and $H$ respectively. Recall that $S_n(X)$ denotes the part of the sphere $S_n$ in $X \subset G\times H$. Since $S$ is split,  it would be helpful to have in mind that $d(1, (g, h))=d_{S_G}(1, g) + d_{S_H}(1, h)$ for any $(g, h) \in G\times H$. Thus the sphere $S_n=S_n(G\times H) $ in $G\times H$ can be decomposed as in the following way,
$$
S_n = \cup_{0\le i\le n} S_i(G) \times S_{n-i}(H).
$$ 
Note that $S_i(G)$ coincides with the sphere of radius $i$ in the Cayley graph $\mathscr G(G, S_G)$ of $G$ with respect to $S_G$.   

\begin{lem}\label{nullset}
For any fixed $0 < t < 1$,  the following holds
\begin{equation}\label{almostall}  
\frac{|\cup_{0 \le i\le tn} S_i(G) \times S_{n-i}(H)|}{|S_n|} \to 0, \; n\to \infty
\end{equation}

\end{lem}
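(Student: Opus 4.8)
The plan is a direct counting estimate that exploits the gap between the growth rates of the two factors. Write $\nu_G := \nu_{G, S_G}$ and $\nu_H := \nu_{H, S_H}$, so the hypothesis reads $\nu_G > \nu_H$. Since $S$ is split we have $d(1,(g,h)) = d_{S_G}(1,g) + d_{S_H}(1,h)$, so that $S_i(G) \times S_{n-i}(H) \subset S_n$; in particular, taking $i = n$ and using $S_0(H) = \{1\}$, one gets the lower bound $|S_n| \ge |S_n(G)|$. As $G$ is relatively hyperbolic and $S_G$ is a finite generating set of $G$, Lemma \ref{expball} applies to the pair $(G, S_G)$ (recall $S_i(G)$ is the sphere of radius $i$ in $\mathscr G(G,S_G)$) and yields a constant $c > 1$ with $\exp(i\nu_G) \le |S_i(G)| \le c\exp(i\nu_G)$ for all $i \ge 1$ (and trivially $|S_0(G)| = 1 \le c$). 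Hence $|S_n| \ge \exp(n\nu_G)$, which will serve as the denominator bound.

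For the numerator I would bound the union by the sum of the cardinalities of the at most $tn+1$ product sets:
$$
\Big|\bigcup_{0 \le i \le tn} S_i(G) \times S_{n-i}(H)\Big| \le \sum_{0 \le i \le tn} |S_i(G)|\cdot |S_{n-i}(H)|.
$$
The $G$-factor is controlled by Lemma \ref{expball} as above. For the $H$-factor the only available input is the definition of the growth rate: for any $\delta > 0$ there is a constant $C_\delta \ge 1$ with $|S_j(H)| \le C_\delta \exp(j(\nu_H + \delta))$ for all $j \ge 0$, the finitely many small values of $j$ being absorbed into $C_\delta$. Substituting both bounds and factoring $\exp(n(\nu_H+\delta))$ out of the sum, the right-hand side is at most
$$
c\,C_\delta \exp(n(\nu_H+\delta)) \sum_{0 \le i \le tn} \exp\big(i(\nu_G - \nu_H - \delta)\big).
$$

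The crux, and essentially the only point requiring care, is the choice of $\delta$: I would \emph{take $\delta$ small enough that $\nu_G - \nu_H - \delta > 0$}, which is possible exactly because $\nu_G > \nu_H$. Then the geometric sum above is dominated, up to a multiplicative constant $\kappa = \kappa(\delta)$, by its \emph{largest} term, which occurs at $i$ near $tn$ and is comparable to $\exp(tn(\nu_G - \nu_H - \delta))$. Dividing the resulting estimate for the numerator by $|S_n| \ge \exp(n\nu_G)$ leaves a factor $\exp(n\alpha)$ with
$$
\alpha = (\nu_H + \delta) + t(\nu_G - \nu_H - \delta) - \nu_G = -(1-t)(\nu_G - \nu_H - \delta) < 0,
$$
since $0 < t < 1$. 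Thus the ratio in (\ref{almostall}) decays exponentially in $n$ and tends to $0$. The one thing to watch is precisely that, because $\nu_G - \nu_H - \delta > 0$, the dominant contribution to the sum comes from the term with $i$ near $tn$ rather than near $0$; one must check that even this worst-case term, after division by $\exp(n\nu_G)$, still carries a strictly negative exponent — which is exactly the computation displayed above.
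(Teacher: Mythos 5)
Your proof is correct and follows essentially the same route as the paper: bound $|S_j(H)|$ by $C\exp(j\nu)$ for some $\nu$ strictly between $\nu_H$ and $\nu_G$ (your $\nu_H+\delta$), control $|S_i(G)|$ via Lemma \ref{expball}, and observe that the resulting geometric sum is dominated by its term at $i\approx tn$, which after division by the denominator leaves the exponent $-(1-t)n(\nu_G-\nu_H-\delta)<0$. The only cosmetic difference is that you lower-bound the denominator by $|S_n(G)|\ge\exp(n\nu_G)$ while the paper uses $|\cup_{tn\le i\le n}S_i(G)\times S_{n-i}(H)|$; both yield the same exponential decay.
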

\begin{proof}
We use $\prec$ and $\asymp$ to denote the inequality and equality respectively, up to a computable multiplicative constant. Note that there exists $\nu_{G, S_G}> \nu >\nu_{H, S_H}$ such that $|S_i(H)| \prec \exp(\nu i)$ for all $i>0$. For simplicity, denote $\nu_G=\nu_{G, S_G}$. 

Since $G$ is relatively hyperbolic, it follows by Lemma \ref{expball} that $|S_i(G)|\asymp \exp(i \nu_{G})$ for $i \ge 0$. Observe that  
$$
\begin{array}{ll} 
\frac{|\cup_{0 \le i\le tn} S_i(G) \times S_{n-i}(H)|}{|\cup_{tn \le i\le n} S_i(G) \times S_{n-i}(H)|} & \prec \frac{\sum_{0 \le i\le tn} \exp(i\nu_G) \exp((n-i)\nu)}{\sum_{tn \le i\le n} \exp(i \nu_G)} \\
&\\
& \prec  \frac{\exp(tn(\nu_G-\nu))}{\exp((n+1)(\nu_G-\nu))(1-\exp((tn-n)\nu_G)} \\
&\\
& \prec  \frac{\exp((tn-n-1)(\nu_G-\nu))}{1-\exp((tn-n)\nu_G)},
\end{array}
$$ which tends to $0$ as $n\to \infty$ for any fixed $0 < t < 1$.
\end{proof}

We now proceed as in Section 3, and indicate the necessary changes. Fix any number $0 < \rho < 1/2$.  Assume that $1> t >  \rho$. 

We consider the annular-like set $A_{tn, n} :=\cup_{tn \le i\le n} S_i(G) \times S_{n-i}(H)$. By Lemma \ref{nullset}, we know that 
\begin{equation}\label{almostall2}
|A_{tn, n}|/|S_n|\to 1
\end{equation}
as $n\to \infty$.

Choose $R>\max\{2R_0, \mathcal R(2D_1)\}$, where $R_0$ and $D_1$ are given by Constants \ref{Constants}. We define $C_{R+i}$ sets in $A_{tn, n}$ for $i\ge 0$ as in Section 3, where $S_n$ is replaced by $A_{tn, n}$. 


Let $C_R$ be the set of elements $(g, h) \in A_{tn, n}$ such that there exists a geodesic $\gamma_g=[1, g]$ in the Cayley graph $\mathscr G(G, S_G)$ of $G$ such that $\gamma_g$ contains an $(\epsilon, R_0)$-transition point in the (closed) $R$-neighbourhood of $\gamma_g(\rho n)$.


We continue to subdivide $A_{tn, n} \setminus C_R$. For $i\ge 1$, define $C_{R+i}$ to be the set of elements $(g, h)$ in $A_{tn, n} \setminus C_R$ where the point $z \in \gamma$ defined in Section 3.1 is nearest to $1$ among all $\gamma=[1, g]$ in $\mathscr G(G, S_G)$ and has an exact distance $(R+i)$ to $\gamma(\rho n)$. Therefore, $A_{tn, n}=\cup_{i\ge 0} C_{R+i}$ as a disjoint union.  

We prove an analogue of Lemma \ref{crux}.
\begin{lem}\label{crux2}
For any $\varepsilon>0$, there exist $R_1>0$ with the following property.  Let $R \ge R_1$ and $n\ge 1$ such that $\rho n > R$. Then the following holds
$$
\sum_{i\ge 1}  |C_{R+i}|/|A_{tn, n}| \le \varepsilon,
$$
where $i \le \rho n -R$.
\end{lem}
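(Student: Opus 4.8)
The plan is to mimic the proof of Lemma~\ref{crux} almost verbatim, using that the whole relatively hyperbolic structure --- the collection $\mathbb P$, contracting sets, transition points, and the estimates of Lemma~\ref{expball} and Corollary~\ref{convergent} --- lives in the first factor $G$ and its Cayley graph $\mathscr G(G,S_G)$, while the $H$-coordinate rides along passively; note also that, in contrast with Lemma~\ref{nullset}, the desired inequality is internal to $A_{tn,n}$, so we never compare with $|S_n|$. Fix $i\ge 1$ with $R+i\le \rho n$ and take $(g,h)\in C_{R+i}$, so $g\in S_j(G)$ and $h\in S_{n-j}(H)$ for some $tn\le j\le n$. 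Since $1>t>\rho$ we have $j>\rho n$, so along the geodesic $\gamma_g=[1,g]$ in $\mathscr G(G,S_G)$ both the segment $\gamma_g([\rho n-R-i,\rho n])$, of length $R+i$, and the terminal segment $[\gamma_g(\rho n),g]$, of length $j-\rho n$, are legitimate, and $\rho n-R-i\ge 0$. By definition of $C_{R+i}$ there is $X\in\mathbb P$ with $\gamma_g([\rho n-R-i,\rho n])\subset N_\epsilon(X)$. Decomposing $g$ along $\gamma_g$ exactly as in Lemma~\ref{crux} (an initial piece reaching $\gamma_g(\rho n-R-i)$; a middle piece traversing $N_\epsilon(X)$, which up to the $\epsilon$-fuzz at its endpoints is recorded by an element of some $P_k$; and a terminal piece of length $j-\rho n$), and counting the $H$-coordinate separately, one gets
\begin{equation*}
|C_{R+i}|\ \le\ \sum_{tn\le j\le n}\ \sum_{1\le k\le m}|S_{\rho n-R-i}(G)|\cdot|B(1,\epsilon)|^{2}\cdot|S_{R+i+2\epsilon}(P_k)|\cdot|S_{j-\rho n}(G)|\cdot|S_{n-j}(H)|.
\end{equation*}

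The one genuinely new point, compared with Lemma~\ref{crux}, is that the $j$-summation together with the $H$-factor reconstitutes $|A_{tn,n}|$: since $A_{tn,n}=\bigcup_{tn\le j\le n}S_j(G)\times S_{n-j}(H)$ is a disjoint union, $\sum_{tn\le j\le n}|S_j(G)|\,|S_{n-j}(H)|=|A_{tn,n}|$. By the two-sided bound of Lemma~\ref{expball} applied in $\mathscr G(G,S_G)$ one has $|S_{\rho n-R-i}(G)|\cdot|S_{j-\rho n}(G)|\le c^{2}e^{-\nu_G(R+i)}|S_j(G)|$, and, as in Lemma~\ref{crux}, $|S_{R+i+2\epsilon}(P_k)|\le|S_{R+i}(P_k)|\cdot|S_{2\epsilon}|$. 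Substituting these, so that the $j$-summation and the $H$-factor reconstitute $|A_{tn,n}|$ and cancel against the denominator, yields, with a constant independent of $n$, $j$ and $i$,
\begin{equation*}
\frac{|C_{R+i}|}{|A_{tn,n}|}\ \prec\ e^{-\nu_G(R+i)}\sum_{1\le k\le m}|S_{R+i}(P_k)|.
\end{equation*}

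Summing over $i\ge 1$, the right-hand side becomes $\sum_{1\le k\le m}\sum_{i\ge 1}|S_{R+i}(P_k)|e^{-\nu_G(R+i)}$, a tail of the series $\sum_{l\ge 1}|S_l(P_k)|e^{-\nu_G l}$ which converges for every $P_k\in\mathcal P$ by Corollary~\ref{convergent}. As $\mathcal P$ is finite, these finitely many tails tend to $0$ simultaneously as $R\to\infty$, so one chooses $R_1$ with $\sum_{i\ge1}|C_{R+i}|/|A_{tn,n}|\le\varepsilon$ for all $R\ge R_1$, as required. I do not anticipate a serious obstacle: the argument is that of Lemma~\ref{crux}, and the only things to watch are that every multiplicative constant ($c$, $|B(1,\epsilon)|$, $|S_{2\epsilon}|$, the number $m$ of peripheral subgroups) is independent of $n$ and $j$, and that the inequalities $j>\rho n$ (forced by $t>\rho$) and $\rho n\ge R+i$ (the hypothesis) are exactly what make the geodesic decomposition of $g$ legitimate.
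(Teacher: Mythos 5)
Your proposal is correct and follows essentially the same route as the paper: the same geodesic decomposition in $\mathscr G(G,S_G)$ giving the upper bound on $|C_{R+i}|$ (your $|B(1,\epsilon)|^2\cdot|S_{R+i+2\epsilon}(P_k)|$ versus the paper's $|B(1,4\epsilon)|\cdot|S_{R+i}(P_k)|$ bookkeeping differs only by a uniform constant), the two-sided bound of Lemma \ref{expball} to make the $j$-sum against $|S_{n-j}(H)|$ reconstitute and cancel $|A_{tn,n}|$, and the tail of the convergent series from Corollary \ref{convergent}. No gaps.
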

\begin{proof}
By definition of $C_{R+i}$, for any $(g, h)\in C_{R+i}$, there exists a geodesic $\gamma_g=[1, g]$ such that $\gamma_g([\rho n- R- i, \rho n]) \subset N_\epsilon(X)$ for some $X \in \mathbb P$. It then follows that
$$
\begin{array}{ll}
|C_{R+i}| \le &  \sum_{tn \le j \le n} \sum_{1\le k \le m} |S_{\rho n-R-i}(G)|  \cdot\\
&\\
&   |S_{R+i}(P_k)|\cdot |B(1, 4\epsilon)|  \cdot |S_{j- \rho n}(G)|\cdot |S_{n-j}(H)|
\end{array}
$$
where $R+i \le \rho n$ and $B(1, \epsilon)$ should be understood as the ball in the Cayley graph $\mathscr G(G, S_G)$. 

By Lemma \ref{expball} there exists $c>1$ such that $\exp(l \g G ) \le |S_{l}(G)| \le c\cdot \exp(l\g G )$ for any $l\ge 1$. Thus we obtain that 
$$
|C_{R+i}| \le c^2 |B(1, 2\epsilon)| \cdot \sum_{tn \le j \le n} \sum_{1\le k \le m} \exp(\nu_G(j -R-i))  \cdot |S_{R+i}(P_k)| \cdot |S_{n-j}(H)|.
$$
On the other hand, 
$$|A_{tn, n}| =\sum_{tn \le j\le n} |S_{j}(G)| \cdot |S_{n-j}(H)| \ge \sum_{tn \le j\le n} \exp(j \g G) \cdot |S_{n-j}(H)|.$$ 
 
Similarly as in the proof of Lemma \ref{crux}, the conclusion follows as a consequence of the convergent series given by Corollary \ref{convergent}.
\end{proof}
 
Denote $F=B(1, 2D_1)$, which is the ball in $\mathscr G(G, S_G)$, and $D_{n, R} = 2(tn-\rho n-R -D_1)>0$.  We proceed as in Lemma \ref{mainlayer} to get the following

\begin{lem}\label{productlayer}
For each $i \ge 0$ with $R+i \le \rho n$,  
\begin{equation}\label{xysum3}
\begin{array}{ll}
\sum_{x \in A_{tn, n}, y\in C_{R+i}} d(x, y) & \ge  D_{n, R} \cdot |C_{R+i}| \cdot  (|A_{tn, n}| - \\
& \sum_{tn \le j \le n} |F|\cdot |S_{j-\rho n+R+i}(G)| \cdot |S_{n-j}(H)|).
\end{array}
\end{equation}
\end{lem}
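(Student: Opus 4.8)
The plan is to mimic the proof of Lemma~\ref{mainlayer}, carrying the $H$-coordinate along passively and using the split structure $d(1,(g,h))=d_{S_G}(1,g)+d_{S_H}(1,h)$ to reduce distance estimates in $G\times H$ to estimates in the Cayley graph $\mathscr G(G, S_G)$. Fix $y=(g,h)\in C_{R+i}$. By the definition of $C_{R+i}$ there is a geodesic $\gamma_g=[1,g]$ in $\mathscr G(G,S_G)$ containing an $(\epsilon,R_0)$-transition point $z$ with $d(\gamma_g(\rho n), z)\le R$, and we again choose $z$ nearest to $\gamma_g(\rho n-R)$ among all such geodesics. A geodesic $[1,y]$ in $G\times H$ projects to a geodesic in $\mathscr G(G,S_G)$ passing through $g$ (because $S$ is split, the $G$-part of any geodesic is itself geodesic), so $z$ may be regarded as lying ``under'' the chosen geodesic $[1,y]$ at $G$-distance $\rho n$ from $1$.

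The key step is the dichotomy on $x\in A_{tn,n}$. Write $x=(a,b)$ with $a\in S_j(G)$, $b\in S_{n-j}(H)$ and $tn\le j\le n$. Let $A$ be the set of such $x$ for which some geodesic $[1,a]$ in $\mathscr G(G,S_G)$ passes within $D_1$ of $z$; counting the $G$-part through a point at $G$-distance $\rho n-R$ of $1$ gives the crude bound
\begin{equation*}
|A|\le \sum_{tn\le j\le n} |F|\cdot |S_{j-\rho n+R}(G)|\cdot |S_{n-j}(H)| \le \sum_{tn\le j\le n} |F|\cdot |S_{j-\rho n+R+i}(G)|\cdot |S_{n-j}(H)|.
\end{equation*}
For $x\in B:=A_{tn,n}\setminus A$, the $G$-geodesic triangle with vertices $1, a, g$ (projection of the triangle $1,x,y$) has $z$ as an $(\epsilon,R_0)$-transition point on the side $[1,g]$ at $G$-distance $> D_1$ from the side $[1,a]$, so by Lemma~\ref{thintriangle} there is $w$ on $[a,g]$ with $d(z,w)\le D_1$. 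Lifting $w$ to a point $w'$ on a geodesic $[x,y]$ in $G\times H$ (with the same $G$-coordinate as $w$) and arguing exactly as in Lemma~\ref{mainlayer} — using $d_{S_G}(z,g)\ge tn-\rho n-R$ together with $d_{S_H}(\cdot,\cdot)\ge 0$, and the fact that $d(x,z)\ge d(y,z)$ forces $d_{S_G}(a,w)$ to also be large — one gets $\min\{d(x,y\text{ via }w'),\,d(y,w')\}\ge tn-\rho n-R-D_1$, hence $d(x,y)\ge D_{n,R}$ for all $x\in B$, $y\in C_{R+i}$. Summing over $x\in B$ and $y\in C_{R+i}$ yields \eqref{xysum3}.

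The main obstacle is being careful that the transition-point geometry really only involves the $G$-factor: one must check that for a split generating set the projection $G\times H\to G$ sends geodesics to geodesics and that an $(\epsilon,R_0)$-transition point of the $G$-geodesic remains ``visible'' after this projection, so that Lemmas~\ref{thintriangle} and \ref{uniformtrans} (which are statements about $\mathscr G(G,S_G)$ and $\mathbb P$) apply verbatim. Once this is granted, the $H$-coordinates contribute only nonnegative terms to all the relevant distances and drop out of every lower bound, so the argument of Lemma~\ref{mainlayer} goes through with $S_n$ replaced by $A_{tn,n}$, $S_{n-\rho n+R+i}$ replaced by the sum $\sum_{tn\le j\le n}|S_{j-\rho n+R+i}(G)|\cdot |S_{n-j}(H)|$, and $n$ replaced by $tn$ in the definition of $D_{n,R}$.
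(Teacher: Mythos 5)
Your argument is correct and follows essentially the same route as the paper's: split $A_{tn,n}$ into the set $A$ of points whose $G$-geodesic passes within $D_1$ of $z$ (counted factor by factor over $tn\le j\le n$) and its complement $B$, apply Lemma \ref{thintriangle} in $\mathscr G(G,S_G)$ to place $w\in[g_x,g_y]$ near $z$, and conclude $d(x,y)\ge d_{S_G}(g_x,g_y)\ge D_{n,R}$. The only slip is cosmetic: for $i\ge 1$ the point $z$ lies at distance $R+i$ (not $\le R$) from $\gamma_g(\rho n)$, and it is precisely this that yields the radius $j-\rho n+R+i$ in the count of $A$ directly, without appealing to your (unjustified) monotonicity of sphere sizes.
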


\begin{proof}
We sketch the arguments in the proof of Lemma \ref{mainlayer} with necessary changes.

For any $y=(g_y, h_y) \in C_{R+i}$, there exists a geodesic $\gamma_y=[1, g_y]$ in $\mathscr G(G, S_G)$ such that $\gamma_y$ contains an $(\epsilon, R_0)$-transition point $z$ in the $(R+i)$-neighbourhood of $\gamma_y({\rho n})$.  Then $d(\gamma_y({\rho n}), z) \le R+i$.

Let $A$ be the set of elements $x=(g_x, h_x) \in A_{tn, n}$ such that $d_{S_G}(z, [1, g_x]) \le D_1$.  Thus the cardinality of $A$ is at most
\begin{equation}\label{excludeset1}
|F|\cdot \sum_{tn \le j \le n} |S_{j - \rho n+R+i}(G)| \cdot |S_{n-j}(H)|.
\end{equation}

Let $B=A_{tn, n} \setminus A$. For any $x=(g_x, h_x)$ in $B$, we have $d_{S_G}(z, [1, g_x]) > D_1$, and then $d_{S_G}(z, [g_x, g_y]) \le D_1$ by Lemma \ref{thintriangle}. 

Let $w \in [g_x, g_y]$ such that $d(z, w) \le D_1$.  Then an argument as in Lemma \ref{mainlayer} proves that $\min\{d_{S_G}(g_y, w), d_{S_G}(g_x, w)\} \ge tn-\rho n -R -D_1$. The inequality (\ref{xysum3}) then holds. 
\end{proof}
 
So we have the sum estimate as follows,
$$
\begin{array}{ll}
&\sum_{x, y\in A_{tn, n}} d(x, y) \\
&\\
= &\sum_{i\ge 0}\sum_{x \in A_{tn, n}, y\in C_{R+i}} d(x, y) \\
&\\
\ge & (|A_{t n, n}|^2 - \sum_{i\ge 0} \sum_{tn \le j \le n} |F| \cdot  |C_{R+i}|  \cdot |S_{j-\rho n+R+i}(G)| \cdot |S_{n-j}(H)| ) \cdot D_{n, R}.
\end{array}
$$
Therefore,  
$$
\frac{1}{|S_n|^2} \sum_{x, y\in S_n} \frac{d(x, y)}{n} \ge \frac{1}{|S_n|^2} \sum_{x, y\in A_{tn, n}} \frac{d(x, y)}{n} \ge 2 \frac{|A_{tn, n}|^2}{|S_n|^2} (1- \theta(n, R))(t-\rho - \frac{R+D_1}{n}).
$$
where 
$$
\theta(n, R) = (\sum_{i\ge 0} \sum_{tn \le j \le n} |F| \cdot  |C_{R+i}| \cdot |S_{j-\rho n+R+i}(G)| \cdot |S_{n-j}(H)|)/|A_{tn, n}|^2.
$$

We can prove a similar statement for $\theta(n, R)$ by the same reasoning as in Lemma \ref{ThetanR}. 
\begin{lem}\label{ThetanR2}
For any $\varepsilon>0$, there exists $R_1>0$ with the following property.
Let $R\ge R_1$ and $n \ge 1$ such that $\rho n  \ge R+ R_1$. Then
$\theta(n, R) \le \varepsilon$. 
\end{lem}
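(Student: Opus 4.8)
The plan is to follow the proof of Lemma~\ref{ThetanR} essentially line by line, isolating the contribution of the index $i=0$ from that of the indices $i\ge 1$, and in each case using the two-sided estimate of Lemma~\ref{expball} for the relatively hyperbolic factor $G$ to absorb the ``annular weights'' $|S_{j-\rho n+R+i}(G)|\cdot|S_{n-j}(H)|$ into $|A_{tn,n}|$.

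The one computation underlying both cases is the following. By Lemma~\ref{expball} there is a uniform $c>1$ with $\exp(l\g G)\le |S_l(G)|\le c\exp(l\g G)$ for all $l\ge 0$. Hence, for any integer $s$ with $0\le s\le\rho n-R$ (so that $j-s\ge (t-\rho)n+R\ge 0$ for every $j\in[tn,n]$),
\[
\sum_{tn\le j\le n}|S_{j-s}(G)|\cdot|S_{n-j}(H)|\;\le\; c\,\exp(-s\g G)\sum_{tn\le j\le n}\exp(j\g G)\cdot|S_{n-j}(H)|\;\le\; c\,\exp(-s\g G)\,|A_{tn,n}|,
\]
where the last step uses $|S_j(G)|\ge\exp(j\g G)$ together with the definition $|A_{tn,n}|=\sum_{tn\le j\le n}|S_j(G)|\cdot|S_{n-j}(H)|$.

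With this in hand I would split $\theta(n,R)$ at $i=0$. For the $i=0$ summand of the numerator I apply the displayed inequality with $s=\rho n-R\ge R_1$ and use $C_R\subset A_{tn,n}$ to get the bound $|F|\,c\,\exp(-\g G R_1)\,|A_{tn,n}|^2$; I then first choose $R_1$ so large that $|F|\,c\,\exp(-\g G R_1)\le\varepsilon/2$. For the indices $i\ge 1$ the summation range is $i\le\rho n-R$, so I apply the displayed inequality with $s=\rho n-R-i\ge 0$ and discard the favourable factor $\exp(-s\g G)\le 1$, obtaining $\sum_{tn\le j\le n}|S_{j-\rho n+R+i}(G)|\cdot|S_{n-j}(H)|\le c\,|A_{tn,n}|$; hence the part of $\theta(n,R)$ with $i\ge 1$ is at most $c\,|F|\sum_{i\ge 1}|C_{R+i}|/|A_{tn,n}|$. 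Applying Lemma~\ref{crux2} with $\varepsilon/(2c|F|)$ in place of $\varepsilon$, I enlarge $R_1$ so that this quantity is $\le\varepsilon/2$ whenever $R\ge R_1$ and $\rho n>R$. Adding the two estimates yields $\theta(n,R)\le\varepsilon$, as desired.

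I do not expect a genuine obstacle here: the only structural input is the uniform exponential upper bound for $|S_l(G)|$ in Lemma~\ref{expball}, which is precisely what makes the annular weights collapse to a constant multiple of $|A_{tn,n}|$, while the real analytic content --- convergence of the parabolic Poincar\'e series via Corollary~\ref{convergent} --- has already been packaged into Lemma~\ref{crux2}. The only point to keep an eye on is that all shifted indices $j-\rho n+R+i$ remain nonnegative, which is ensured by the hypothesis $t>\rho$ together with the summation range $i\le\rho n-R$.
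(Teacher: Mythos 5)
Your proposal is correct and follows essentially the same route as the paper: split $\theta(n,R)$ at $i=0$, use the two-sided bound of Lemma~\ref{expball} to absorb the shifted weights $|S_{j-\rho n+R+i}(G)|\cdot|S_{n-j}(H)|$ into a constant multiple of $|A_{tn,n}|$ (with the extra decay factor $\exp(-\g G(\rho n-R))$ for $i=0$), and invoke Lemma~\ref{crux2} for the tail $i\ge 1$. Your version is in fact slightly cleaner in that it bounds $|C_R|$ by $|A_{tn,n}|$ directly rather than routing through $|S_n|/|A_{tn,n}|\to 1$, and it makes explicit the constant $c$ that the paper's inequality $\sum_j|S_{j-\rho n+R+i}(G)|\,|S_{n-j}(H)|\le|A_{tn,n}|$ quietly suppresses.
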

\begin{proof}[Sketch of Proof] 
Recall that $A_{tn, n} =\cup_{tn \le j\le n} S_j(G) \times S_{n-j}(H)$.  Note that  $C_{R} \subset S_n$ and by (\ref{almostall2}),  $|S_n|/|A_{tn, n}| \to 1$ as $n\to \infty$. For the sum with $i=0$, it suffices to estimate the following by Lemma \ref{expball},  
$$
\frac{\sum_{tn \le j \le n}  |S_{j-\rho n+R}(G)|\cdot |S_{n-j}(H)|}{|A_{tn, n}|}  \prec \frac{1}{\exp(\g G(\rho n- R))} \sum_{tn \le j \le n} \frac{1}{\exp(\g G(n-j))},
$$
which tends to $0$, as $(\rho n- R)\to \infty$.  

For the sum with $i\ge 1$, since
$
\sum_{tn \le j \le n} |S_{j-\rho n+R+i}(G)| \cdot |S_{n-j}(H)| \le |A_{tn, n}|,
$
we have by Lemma \ref{crux2},
$$
\sum_{i\ge 1}^{i\le \rho n- R}  |C_{R+i}|/|A_{tn, n}| \to 0, 
$$
as $R\to \infty$.

The proof of the lemma follows easily from the above estimates.
\end{proof}

Finally, for any $\varepsilon>0$, we choose $R>0$ and let $n\to \infty$ to get $E(G, S) \ge 2(1-\varepsilon)(t-\rho)$.
As $\varepsilon, t, \rho$ are arbitrary, we then obtain that $E(G, S) =2$. 



\bibliographystyle{amsplain}
 \bibliography{bibliography}

\end{document}